\documentclass[11pt,a4paper,twoside]{scrartcl}
\usepackage[english]{babel}

\usepackage{amsfonts}
\usepackage{amsmath}
\usepackage{amssymb}
\usepackage{amsthm}

\usepackage{graphicx}
\graphicspath{{pics/}}

\usepackage{enumerate}
\usepackage{verbatim}

\usepackage{
	float,		% Positionierung [H] unterbindet floating
	subcaption, % f++r subfigure und subtable
	xfrac,		% schr+\~{n}ger Bruchstrich
	colortbl,
	mathtools,  % zur Benutzung von \coloneqq (:=)
	pifont,  	% zur Benutzung von \ding{55}
	mathrsfs	% zur Benutzung von \mathscr
}

\usepackage[symbol]{footmisc}
\usepackage{enumitem} % to change enumerate style

\usepackage{hyperref}
\usepackage{todonotes}
\usepackage{pgfplots}
\pgfplotsset{compat=1.8}
\pgfplotsset{
	FTerrstyle/.style={
		width=\textwidth, height=5.6cm,
		grid=both,
		grid style={gray!18, line width=0.2pt},
		tick label style={font=\footnotesize},
		label style={font=\small},
		legend cell align=left,
		legend style={font=\footnotesize, fill=white, fill opacity=0.85,
			text opacity=1, draw=gray!40, inner sep=2pt},
		every axis plot/.append style={line width=0.9pt},
		mark size=1.3pt,
	},
	FTest/.style ={red!80!black, dashed, mark=none},
	FTsim/.style ={blue!75!black, mark=*, mark options={fill=blue!55!white}},
}
\usepackage{tikz}
\usepackage{tikz-cd}
\newtheorem{theorem}{Theorem}[section]
\newtheorem{lemma}[theorem]{Lemma}
\newtheorem{remark}[theorem]{Remark}

\newtheorem{example}[theorem]{Example}
\newtheorem{corollary}[theorem]{Corollary}

\newtheorem{alg}[theorem]{Algorithm}

\makeatletter
\def\imod#1{\allowbreak\mkern10mu({\operator@font mod}\,\,#1)}
\makeatother

\makeatletter
\let\c@algorithm\c@theorem  % algorithms numbered like definitions, ...
\makeatother

\newenvironment{algorithm}[1]{\goodbreak~\begin{alg}[#1]~\vspace{-9pt}~\\
		\rule{\linewidth}{0.5pt}~\\}{\vspace{-9pt}~\\
		\rule{\linewidth}{0.5pt}~\end{alg}}

\numberwithin{equation}{section}
\numberwithin{table}{section}
\numberwithin{figure}{section}

\usepackage[normalem]{ulem}
\newcommand{\new}[1]{#1}
\newcommand{\del}[1]{}
\newcommand{\FFT}{\textup{FFT}}
\newcommand{\NFFT}{\textup{NFFT}}
\newcommand{\R}{\mathbb R}

\newcommand{\Z}{\mathbb Z}

\renewcommand{\qedsymbol}{\rule{1.5ex}{1.5ex}}
\long\def\symbolfootnote[#1]#2{\begingroup%
	\def\thefootnote{\fnsymbol{footnote}}\footnote[#1]{#2}\endgroup}

\allowdisplaybreaks
\title{Computation of the Fourier transform for a continuous integrable function via NFFT}

\date{\today}
\author{Daniel Potts\footnotemark[1] \and Manfred Tasche\footnotemark[3]}

\hypersetup{pdfauthor={Daniel Potts, Manfred Tasche},
	pdftitle={Computation of the Fourier transform via  NFFT},
	pdfsubject={},
	pdfcreator = {pdflatex},
	plainpages=false,
	pdfstartview=FitH,
	pdfview=FitH,
	pdfpagemode=UseOutlines,
	bookmarksnumbered=true,
	bookmarksopen=false,
	bookmarksopenlevel=0,
	colorlinks=true,
	linkcolor=black,
	citecolor=black,
	urlcolor=black}

\begin{document}
	
	\maketitle
	
\begin{abstract}
	We investigate the approximation of continuous Fourier transforms by
	trigonometric sampling polynomials and their efficient evaluation by the
	nonequispaced fast Fourier transform (NFFT). While the NFFT is traditionally
	used for the evaluation of trigonometric polynomials, we show that it can also
	serve as an effective computational tool for the approximation of Fourier
	transform values.
	Building on ideas of M.~Ehler, K.~Gr\"{o}chenig, and A.~Klotz \cite{EhGrKl24}, we derive explicit 
	$\ell_\infty$ uniform error bounds between the Fourier transform and suitable sampling
	polynomials. The resulting estimates quantify the influence of the sampling
	width and truncation parameter and provide rigorous accuracy guarantees on
	entire frequency intervals. In contrast to previous analyses focusing mainly on
	discrete or $L_2$-type errors, our results yield uniform approximation bounds
	that are directly relevant for practical computations.
	\new{Moreover, we prove a matching lower bound which shows that the derived
	rate in the truncation parameter is sharp.}

	The derived theory leads to a simple algorithmic framework: first approximate
	the Fourier transform by a trigonometric sampling polynomial and then evaluate this
	polynomial efficiently by the NFFT. Numerical experiments confirm the
	theoretical convergence rates and demonstrate that accurate approximations of
	continuous Fourier transforms can be obtained with moderate computational
	effort.

		\noindent\emph{Key words}: computation of Fourier transform,
		fast Fourier transform, nonequispaced fast Fourier transform, NFFT, maximum error estimates, Poisson summation formula, Wiener amalgam space.
		\smallskip
		
		\noindent AMS \emph{Subject Classifications}: \text{42A38, 65T50, 94A20}
		%	%42A10 % Trigonometric approximation
		%		%	%42B05 % Fourier series and coefficients
		%		%	%42C15 % General harmonic expansions, frames
		%		%	%65-XX % Numerical Analysis
		%		%	65D05, % Numerical approximation - Interpolation
		%		%	65D30 % Numerical integration
		%		%	65D32, % Quadrature and cubature formulas
		%		65Txx, % Numerical methods in Fourier Analysis
		%		65T50, % Discrete and fast Fourier transforms
		%%		65F05. % Direct numerical methods for linear systems and matrix inversion
		%		%	65F10, % Iterative numerical methods for linear systems
		%		%	%65T40, % Trigonometric approximation and interpolation
		%		94A20 % Sampling theory
		%		%	94A24. % Coding theorems (Shannon theory)		
	\end{abstract}
	
\footnotetext[1]{potts@math.tu-chemnitz.de, Chemnitz University of
		Technology, Faculty of Mathematics, D--09107 Chemnitz, Germany}
	\footnotetext[3]{manfred.tasche@math.uni-rostock.de,  University of Rostock, Institute of Mathematics, D--18051 Rostock, Germany}

	%===============================================================================

	\section{Introduction}
	
The nonequispaced fast Fourier transform (NFFT) is a well-established tool for
the efficient evaluation of trigonometric polynomials at arbitrary nodes
\cite{duro93,bey95,st97,PlPoStTa23,GrLe04,FINUFFTpaper}.
Traditionally, the NFFT is employed once the Fourier coefficients of a
trigonometric polynomial are already known. In many applications, however, the
primary quantity of interest is not the trigonometric polynomial itself but the
continuous Fourier transform
	\begin{equation}
\label{eq:hatf}
{\hat f}(v) = \int_{\mathbb{R}} f(x)\,{\mathrm{e}}^{-2\pi{\mathrm{i}}\,xv}\,{\mathrm{d}}x\,,
\quad v \in \mathbb{R}\,,
\end{equation}

This naturally raises the question whether the NFFT can also be used as a
numerical method for computing values of the Fourier transform.

The present paper provides a positive answer to this question. Building upon
ideas developed by M.~Ehler, K.~Gr\"{o}chenig, and A.~Klotz \cite{EhGrKl24}, we interpret
trigonometric sampling polynomials as approximations of the Fourier transform
and derive explicit uniform error estimates. These results show that the NFFT
can be employed not only for the evaluation of trigonometric polynomials but
also for the efficient computation of values of the continuous Fourier
transform.
Consequently, the NFFT becomes more than a fast evaluation scheme for
trigonometric polynomials. Combined with the approximation results developed in
this paper, it yields a practical and theoretically justified method for the
computation of continuous Fourier transforms.	

More precisely, we consider the computation of the Fourier transform of $\hat f$, see \eqref{eq:hatf},
	under certain decay conditions on $f$ and $\hat{f}$.
	We restrict $f$ to the spatial domain $[-L/2,\, L/2]$ and $\hat{f}$ to the
	frequency domain $[-P/2,\, P/2]$, where $L \in 2\mathbb{N}$ and $P \in 2\mathbb{N}$
	are sufficiently large positive integers. We sample $f$ at the equispaced points
	$j/P$, $j \in [LP]$, with spatial step size $1/P$, where
	$$
	[LP] := \bigl\{j \in \mathbb{Z} : -\tfrac{LP}{2} < j \leq \tfrac{LP}{2}\bigr\}
	$$
	denotes the centered index set. The $P$-\emph{periodic trigonometric sampling
		polynomial of} $f$ \emph{with degree} $LP/2$ is defined by
	\begin{equation}\label{eq:samplingpoly}
		s_{P,L}(v) := \frac{1}{P}\sum_{n \in [LP]} f\!\left(\frac{n}{P}\right)
		{\mathrm{e}}^{-2\pi{\mathrm{i}}\,nv/P}\,, \quad v \in \mathbb{R}\,,
	\end{equation}
which approximates ${\hat f}(v)$ on the compact frequency interval $[-P/2,\,P/2]$.
	We determine approximate values of ${\hat f}\!\left(\frac{k}{L}\right)$,
	$k \in [LP]$, by evaluating the trigonometric sampling polynomial $s_{P,L}(v)$ at $v = k/L$, giving
	\begin{equation}
		\label{eq:gell}
		s_{P,L}\!\left(\tfrac{k}{L}\right)
		= \frac{1}{P} \sum_{j \in [LP]} f\!\left(\frac{j}{P}\right)
		{\mathrm{e}}^{-2\pi{\mathrm{i}}\,jk/(LP)}\,, \quad k \in [LP]\,.
	\end{equation}
	This standard approach to computing the Fourier transform is well documented in
	the literature (see, e.g., \cite[pp.~17--23]{BH95},
	\cite[pp.~507--508]{PlPoStTa23}, or \cite{EhGrKl24}). Note that the values \eqref{eq:gell}
	can be computed efficiently via the fast Fourier transform (FFT), provided
	$L$ and $P$ are products of powers of the small primes, see \cite{FJ05}. More generally, the trigonometric sampling polynomial $s_{P,L}(v)$ can be
	evaluated at arbitrary points $v$ with $|v| \leq P/2$ by means of the
	nonequispaced fast Fourier transform (NFFT). Recall that the NFFT is a fast approximate
	algorithm to evaluate a 1-periodic trigonometric polynomial
	\begin{equation}
		\label{eq:trig_poly}
		\sum_{k \in [M]} c_{k}\,{\mathrm{e}}^{2\pi{\mathrm{i}}\,kx},
		\quad x \in \mathbb{R},
	\end{equation}
	with given coefficients $c_k \in \mathbb{C}$, $k \in [M]$, at
	given nonequispaced points $x_j \in [0,\,1]$, $j = 1,\dots,N$,
	where  $M \in 2\mathbb{N}$.
	The fundamental idea of the NFFT is to exploit the FFT to efficiently switch
	between the spatial and frequency domains, thereby enabling a computationally
	efficient approximation at arbitrary nonuniform points.
	Throughout this paper, we assume that $L,\, P \in 2\mathbb{N}$ are sufficiently large.
	
	In \cite{EhGrKl24}, M.~Ehler, K.~Gr\"ochenig, and A.~Klotz established error estimates
	for this computation of the values ${\hat f}\!\left(\tfrac{k}{L}\right)$ for $k \in [L P]$. Specifically, they studied how the scaled Euclidean
	approximation error
	$$
	E_{P,L}^{\ast}(f) := \frac{1}{\sqrt{L}}\left(\sum_{k \in [LP]}
	\left|{\hat f}\!\left(\tfrac{k}{L}\right) - s_{P,L}\!\left(\tfrac{k}{L}\right)\right|^2\right)^{1/2}
	$$
	depends on $P$ and $L$, given known decay behavior of $f \in C(\mathbb{R})$ and
	$\hat{f} \in C(\mathbb{R})$. These estimates are formulated in terms of norms of Wiener
	amalgam spaces \cite{Fe92a}.
	
	In this paper, we address the analogous question for the scaled maximum
	approximation error
	\begin{equation}
		\label{eq:uniform}
		M_{P,L}(f) := \frac{1}{\sqrt L}\,\max_{|v| \leq P/2} \bigl|{\hat f}(v) - s_{P,L}(v)\bigr|\,,
	\end{equation}
	again under decay conditions on $f$ and $\hat{f}$. In particular, we study the
	discretized version
	\begin{equation}
\label{eq:discreteuniform}
M_{P,L}^{\ast}(f) := \frac{1}{\sqrt L}\,
	\max_{k \in [LP]} \left|{\hat f}\!\left(\tfrac{k}{L}\right)
	- s_{P,L}\!\left(\tfrac{k}{L}\right)\right|\,.
	\end{equation}
Obviously, it holds
$M_{P,L}^{\ast}(f) \leq M_{P,L}(f)$.
Therefore we are interested in practicable, explicit upper bounds of $M_{P,L}(f)$. We discuss the following questions:
%\begin{enumerate}
%\item How should $L$, $P \in 2 \mathbb N$ be chosen depending on the decay behavior of $f$, $\hat f \in W(C(\mathbb R),\ell^1(\mathbb Z))$, see Section \ref{sec:samplingpolynomials} for the definition,
%such that the scaled maximum approximation error $M_{P,L}(f)$ is small? We measure the decay behavior of $f$ and $\hat f$
%by the corresponding decay rates \eqref{eq:decayrate}.
%\item How can ${\hat f}(v)$ for $|v| \le P$, be rapidly computed by NFFT with small approximation error $M_{P,L}(f)$?
%\end{enumerate}
\begin{enumerate}
	\item Under which decay assumptions on $f$ and $\hat f$ can the Fourier transform
	$\hat f$ be uniformly approximated by the trigonometric sampling polynomial
	$s_{P,L}$ with explicitly controllable error?
	
	\item How can the NFFT be employed to compute values of the continuous Fourier
	transform $\hat f(v)$ efficiently and with rigorous $\ell_\infty$ error bounds?
\end{enumerate}

We introduce the so-called decay rate $\delta(f,L)$ of $f \in W(C(\mathbb R), \ell^1(\mathbb Z))$ with respect to step
size $L \in 2\mathbb N$, which is defined by \eqref{eq:decayrate}. If $f \in C(\mathbb R)$ has polynomial or exponential decay
(see Example \ref{Ex:poly/expodecay}), then the corresponding decay rate $\delta(f,L)$ can be easily estimated (see Lemma \ref{Lemma:polydecay}
and Lemma \ref{Lemma:expodecay}). Applying the Poisson summation formula \eqref{eq:Poisson2}, we show in Theorem \ref{Thm:samplingpolynomials}
that an upper bound of $M_{P,L}(f)$ is a linear combination of the decay rates $\delta(\hat f,P)$ and $\delta(f,L)$, if
$f$, $\hat f \in W(C(\mathbb R),\ell^1(\mathbb Z))$ and $P$, $L\in 2 \mathbb N$ are given.

Theorem \ref{Thm:samplingpolynomials} shows that the Fourier transform
$\hat f$ can be approximated uniformly by the trigonometric sampling polynomial
$s_{P,L}$ with an error controlled by the decay rates of $f$ and $\hat f$.
Since $s_{P,L}$ is a trigonometric polynomial, its values can be computed
efficiently by FFT or NFFT techniques. Consequently, the theorem provides a
rigorous foundation for the computation of continuous Fourier transforms by
means of the NFFT.

\medskip

\new{The main contributions of this paper can be summarized as follows:
\begin{enumerate}[label=(C\arabic*)]
\item We introduce the decay rate $\delta(f,L)$ in \eqref{eq:decayrate} and prove
in Theorem~\ref{Thm:samplingpolynomials} the \emph{uniform} error bound
$M_{P,L}(f) \le L^{-1/2}\,\delta(\hat f,P) + L^{1/2}\,\delta(f,L)$, valid on the
whole interval $[-P/2,\,P/2]$ and not only on the grid $\{k/L\}$. This is the
property which is needed if $s_{P,L}$ is evaluated at \emph{arbitrary} nodes by
the NFFT.
\item All constants in the resulting estimates
\eqref{eq:maxerrorpolydecay}, \eqref{eq:maxerrorexpodecay} and
\eqref{eq:maxerrormixeddecay} are explicit and are computed directly from the
decay parameters of $f$ and $\hat f$. Moreover, the exponential case is treated
for all $\alpha,\,\beta > 0$, whereas \cite{EhGrKl24} is restricted to
$0 < \alpha,\,\beta \le 1$.
\item We prove in Theorem~\ref{Thm:lowerbound} a matching \emph{lower} bound,
which shows that the rate $L^{1/2-a}$ in the truncation parameter cannot be
improved, and we determine in Remark~\ref{Rem:notequivalent} the exact range
$\bigl[1,\,\sqrt{LP}\,\bigr]$ of the quotient
$E_{P,L}^{\ast}(f)/M_{P,L}^{\ast}(f)$. 
\item We state the resulting method explicitly as
Algorithm~\ref{Alg:FTviaNFFT}, including its arithmetic cost and its a~priori
error bound, and we compare direct evaluation, FFT and NFFT numerically in
Section~\ref{sec:cost}.
\end{enumerate}
\medskip
}

	This paper is organized as follows:
\new{Section~\ref{sec:literature} reviews the relevant literature and states
precisely which of our results are new.}
In Section~\ref{sec:samplingpolynomials}, we recall
	the Wiener amalgam space $W(C(\mathbb{R}),\ell^1(\mathbb{Z}))$, introduce the decay rate
	$\delta(f,L)$, and derive uniform error estimates for the scaled maximum approximation
	error $M_{P,L}(f)$ under polynomial, exponential, and mixed decay conditions on $f$ and
	$\hat{f}$\new{, together with a matching lower bound and the resulting
	Algorithm~\ref{Alg:FTviaNFFT}}.
Section~\ref{sec:numerics} demonstrates how the theoretical error bounds
translate into practical parameter choices for FFT- and NFFT-based
approximations of the Fourier transform. In particular, the numerical
experiments confirm that the NFFT yields highly accurate evaluations of
$\hat f(v)$ throughout the frequency interval while preserving the expected
computational efficiency.
\new{Section~\ref{sec:cost} is devoted to the computational cost and compares
direct evaluation, FFT and NFFT. Section~\ref{sec:conclusion} concludes the
paper.}

\new{
\section{Literature review\label{sec:literature}}

In this section we relate the present paper to the state of the art. We group
the discussion into three strands -- the approximation of the Fourier transform
by discrete sums, the function-space framework, and fast algorithms -- and
close with a precise account of what is new here.

\subsection{Approximation of the Fourier transform by discrete sums}

Replacing the Fourier integral \eqref{eq:hatf} by a finite sum of the form
\eqref{eq:gell} is the oldest and most widely used numerical device in Fourier
analysis; classical accounts are given in \cite[pp.~17--23]{BH95} and
\cite[pp.~507--508]{PlPoStTa23}. The two error sources -- \emph{aliasing},
caused by the discretization of the integral with step size $1/P$, and
\emph{truncation}, caused by restricting the sum to the window
$[-L/2,\,L/2]$ -- have long been understood qualitatively. Quantitative,
non-asymptotic statements have been obtained only recently.

The paper of M.~Ehler, K.~Gr\"ochenig, and A.~Klotz \cite{EhGrKl24} is the direct starting point of the
present work. The authors measure the accuracy of \eqref{eq:gell} by the scaled
Euclidean error $E_{P,L}^{\ast}(f)$ and prove, for $f$ and $\hat f$ in Wiener
amalgam spaces, bounds of the form $C\,(L^{-\alpha} + P^{-\beta})$ with
$\alpha < a - \frac12$ and $\beta < b - \frac12$ in the case of polynomial
decay, and of the form
$C\,({\mathrm e}^{-r'(L/2)^{\alpha}} + {\mathrm e}^{-s'(P/2)^{\beta}})$ with
$r' < r$, $s' < s$ and $0 < \alpha,\,\beta \le 1$ in the case of exponential
decay. Their results are asymptotically sharp in the exponents, and they give
an asymptotically optimal recipe for balancing the number of samples, the
sampling step and the grid size. The loss  in the exponents is intrinsic to their argument.

Closely related questions arise in the numerical realization of Shannon's
sampling theorem, where a bandlimited function is reconstructed from finitely
many samples. Sharp bounds for the norm of the Shannon sampling operator and
regularized sampling formulas with sinh-type or continuous Kaiser--Bessel
windows are studied in \cite{KiPoTa22, KiPoTa23}; see also the earlier work \cite{Q03,StTa06}.
These papers are concerned with the reconstruction of $f$ from its samples,
i.e.\ with the situation of Remark~\ref{Rem:maxPoisson1}, whereas we are
interested in the forward problem, the evaluation of $\hat f$.

\subsection{The function-space framework}

The natural setting for the Poisson summation formula
\eqref{eq:Poisson1}--\eqref{eq:Poisson2} is the Wiener amalgam space
$W(C(\mathbb{R}),\ell^1(\mathbb{Z}))$ introduced by N.~Wiener
\cite[p.~73]{Wie88} and developed systematically by H.G.~Feichtinger
\cite{Fe92a}; see also \cite[pp.~103--105]{G01}. The example of
Y.~Katznelson \cite{K67} shows that the space
$C(\mathbb{R}) \cap L^1(\mathbb{R})$ is genuinely too large, and
K.~Gr\"ochenig \cite{G96} clarified the role of the underlying uncertainty
principle. We adopt this framework unchanged from \cite{EhGrKl24}, since it is
exactly the setting in which the two Poisson summation formulae hold pointwise
and with absolute and uniform convergence, which is what a uniform error bound
requires. 

\subsection{Fast algorithms}

The NFFT goes back to A.~Dutt and V.~Rokhlin \cite{duro93}, G.~Beylkin
\cite{bey95} and G.~Steidl \cite{st97}; a survey is \cite{GrLe04} and a textbook
treatment is \cite[Chapter~7]{PlPoStTa23}. Widely used software libraries are
NFFT3 \cite{nfft3,KeKuPo09} and FINUFFT \cite{FINUFFT,FINUFFTpaper}; the
experiments in Section~\ref{sec:cost} use the Python interface
\texttt{pyNFFT3} \cite{pyNFFT3} of NFFT3.

The accuracy of the NFFT itself -- i.e.\ the error committed when the
trigonometric polynomial \eqref{eq:trig_poly} with \emph{given} coefficients is
evaluated approximately -- is by now well understood. Uniform error estimates
for compactly supported window functions are given in \cite{PoTa21a}, continuous
window functions are analysed in \cite{PoTa21b}, and the case of nonequispaced
data in both domains, together with fast sinc transforms, is treated in
\cite{KiPoTa23a}. The FINUFFT kernel is analysed in \cite{Ba21}. We stress that
this line of work is complementary to ours: it bounds the difference between
$s_{P,L}$ and its NFFT approximation, whereas we bound the difference between
$\hat f$ and $s_{P,L}$. Only the combination of both, as in
Algorithm~\ref{Alg:FTviaNFFT} and Remark~\ref{Rem:totalerror}, yields a
rigorous bound for the quantity a user actually computes. 

\subsection{What is new in this paper\label{sec:whatisnew}}

Since the relation to \cite{EhGrKl24} is close, we state explicitly which of
our results are new, which cannot be obtained from \cite{EhGrKl24}, and which
assumptions have been weakened. A synopsis is given in
Table~\ref{tab:comparison}.

\begin{enumerate}[label=(N\arabic*), leftmargin=*]
\item \emph{A different, strictly finer error measure.} We estimate the
maximum error $M_{P,L}(f)$ over the \emph{whole interval} $[-P/2,\,P/2]$, while
\cite{EhGrKl24} estimates the Euclidean error $E_{P,L}^{\ast}(f)$ over the
\emph{finite grid} $\{k/L:\,k \in [LP]\}$. A maximum error does occur in
\cite[Section~2.7, formula~(12)]{EhGrKl24}, but only through the trivial
estimate $M_{P,L}^{\ast}(f) \le E_{P,L}^{\ast}(f)$, i.e.\ the discrete maximum
error is bounded there by the Euclidean error and therefore inherits its rate.
This is exactly the step which is lossy: the two quantities satisfy
$$
M_{P,L}^{\ast}(f) \;\le\; E_{P,L}^{\ast}(f)
\;\le\; \sqrt{L P}\; M_{P,L}^{\ast}(f)\,,
$$
and both inequalities are attained up to constants, see
Remark~\ref{Rem:notequivalent} and Figure~\ref{fig:EstarM}. The two error
measures are therefore \emph{not} equivalent: their quotient is unbounded as
$LP \to \infty$. Consequently, Theorem~\ref{Thm:polydecay} cannot be deduced
from \cite[Theorem~2.1]{EhGrKl24} via \cite[(12)]{EhGrKl24}; that route yields
only the exponent $P^{-b+1/2+\varepsilon}$ instead of $P^{-b}$, and the gap is
clearly visible in Figure~\ref{fig:EstarM}. Conversely, an estimate on a finite
grid carries no information about the values of $\hat f - s_{P,L}$ between the
grid points, which is precisely the information needed when $s_{P,L}$ is
evaluated by an NFFT at arbitrary nodes.

\item \emph{Explicit constants.} All constants in
\eqref{eq:maxerrorpolydecay}, \eqref{eq:maxerrorexpodecay} and
\eqref{eq:maxerrormixeddecay} are given in closed form in terms of the decay
parameters $a,\,b,\,c,\,d,\,r,\,s,\,\alpha,\,\beta$. This makes the estimates
usable as an a~priori stopping criterion, i.e.\ for the actual choice of $L$
and $P$ (see Algorithm~\ref{Alg:FTviaNFFT}).

\item \emph{No loss in the exponents, and a weaker assumption in the
exponential case.} In the polynomial case we obtain the exponent $b$ in $P$ and
the exponent $a - \frac12$ in $L$, whereas \cite{EhGrKl24} must give away an
arbitrarily small $\varepsilon > 0$ in both exponents. In the exponential case,
\cite[Theorems~2.3 and 2.4]{EhGrKl24} assume $0 < \alpha,\,\beta \le 1$ and
lose an arbitrarily small amount in the decay parameters $r$ and $s$;
Theorem~\ref{Thm:expodecay} and Theorem~\ref{Thm:mixeddecay} hold for
\emph{all} $\alpha,\,\beta > 0$ and retain the original $r$ and $s$. In
particular, the Gaussian ($\alpha = \beta = 2$) of Example~\ref{Ex:Gaussian} is
not covered by \cite{EhGrKl24}.

\item \emph{Sharpness.} Theorem~\ref{Thm:lowerbound} provides a lower bound
which matches the upper bound of Theorem~\ref{Thm:polydecay} in the truncation
parameter $L$ up to a constant factor. In \cite{EhGrKl24} the sharpness of the
estimates is conjectured and confirmed by numerical experiments, but no lower
bound is proved there.

\item \emph{Algorithm and computational assessment.}
Algorithm~\ref{Alg:FTviaNFFT} combines the estimate of
Theorem~\ref{Thm:samplingpolynomials} with
the NFFT and states the total error and the arithmetic cost;
Section~\ref{sec:cost} compares direct evaluation, FFT and NFFT.
\end{enumerate}

The general strategy of splitting the
Poisson summation formula into a principal part and two remainders, and the use
of Wiener amalgam spaces, are taken from \cite{EhGrKl24}, as is the scaling
$L^{-1/2}$ in the definition \eqref{eq:uniform} of the error, which we have
kept in order to make the two sets of results directly comparable.

\begin{table}[ht]
\centering
\captionsetup{font=small}
\caption{Comparison of \cite{EhGrKl24} with the present paper. Here
$a,\,b > 1$ denote the polynomial and $r,\,\alpha,\,s,\,\beta > 0$ the
exponential decay parameters of $f$ and $\hat f$; $\varepsilon > 0$ is
arbitrarily small.}
\label{tab:comparison}
\small
\begin{tabular}{@{}p{0.27\textwidth}p{0.335\textwidth}p{0.335\textwidth}@{}}
\hline
& \cite{EhGrKl24} & this paper \\
\hline
error measure
& $E_{P,L}^{\ast}(f)$, Euclidean, on the grid $\{k/L\}$; $M_{P,L}^{\ast}$ only
  via $M_{P,L}^{\ast}\le E_{P,L}^{\ast}$
& $M_{P,L}(f)$, maximum, on all of $[-P/2,\,P/2]$ \\[0.6ex]
constants
& not specified
& explicit in $a,b,c,d,r,s,\alpha,\beta$ \\[0.6ex]
polynomial decay
& $\mathcal O\bigl(L^{-a+1/2+\varepsilon}$ \newline
  \hspace*{1.2em}$+\ P^{-b+1/2+\varepsilon}\bigr)$
& $\mathcal O\bigl(L^{-a+1/2}$ \newline
  \hspace*{1.2em}$+\ L^{-1/2}P^{-b}\bigr)$, Thm.~\ref{Thm:polydecay} \\[0.6ex]
exponential decay
& only $0 < \alpha,\,\beta \le 1$, rates $r' < r$, $s' < s$
& all $\alpha,\,\beta > 0$, rates $r$, $s$,
  Thm.~\ref{Thm:expodecay} \\[0.6ex]
lower bounds
& conjectured, numerical evidence
& proved, Thm.~\ref{Thm:lowerbound} \\[0.6ex]
evaluation at arbitrary $v$
& not covered
& Alg.~\ref{Alg:FTviaNFFT}, Rem.~\ref{Rem:totalerror} \\[0.6ex]
cost comparison
& --
& Section~\ref{sec:cost} \\
\hline
\end{tabular}
\end{table}
}

\section{Uniform approximation by trigonometric sampling polynomials
	\label{sec:samplingpolynomials}}

The \emph{Wiener amalgam space} $W(C(\mathbb{R}),\ell^1(\mathbb{Z}))$ is the
Banach space consisting of all functions $f:\,\mathbb{R} \to \mathbb{C}$ which
are \emph{locally} in $C(\mathbb{R})$ and have \emph{globally} an
$\ell^1(\mathbb{Z})$ behavior at infinity in the sense that the uniform norms
of $f$ over the intervals $[k,\,k+1]$, $k \in \mathbb{Z}$, form a sequence in
$\ell^1(\mathbb{Z})$. The norm of $W(C(\mathbb{R}),\ell^1(\mathbb{Z}))$ is
given by
$$
\|f\|_{W(C(\mathbb{R}),\ell^1(\mathbb{Z}))}
:= \sum_{k \in \mathbb{Z}} \max_{x \in [0,\,1]} |f(x + k)|\,.
$$
The Wiener amalgam space was introduced by N.~Wiener \cite[p.~73]{Wie88} and was used as convenient function space in Fourier analysis, cf. \cite{Fe92a}, \cite[pp.~103--105]{G01}, and \cite{EhGrKl24}.
The condition $f \in W(C(\mathbb{R}),\ell^1(\mathbb{Z}))$ is strong enough to
exclude many pathological functions which play a role in Fourier analysis but
are of little practical interest. The Wiener amalgam space
$W(C(\mathbb{R}),\ell^1(\mathbb{Z}))$ contains all continuous, compactly
supported functions and is therefore a dense subspace of the Lebesgue space
$L^1(\mathbb{R})$. This Wiener amalgam space has the following property:

\begin{lemma}$\mathrm{(see\;\cite[Lemma~6.1.2]{G01})}$
	\label{Lemma:PropWiener}
	If $f \in W(C(\mathbb{R}),\ell^1(\mathbb{Z}))$ and $R > 0$ are given, then
	$$
	\sup_{x \in \mathbb{R}} \sum_{k \in \mathbb{Z}} |f(x + Rk)|
	\leq \left(\frac{1}{R} + 1\right)\|f\|_{W(C(\mathbb{R}),\ell^1(\mathbb{Z}))}\,.
	$$
\end{lemma}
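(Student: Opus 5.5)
The plan is a counting argument: bound each sample $|f(x+Rk)|$ by the supremum of $|f|$ over the unit interval $[m,\,m+1]$ that contains the point $x+Rk$, and then count how many points of the arithmetic progression $\{x+Rk:\,k\in\mathbb{Z}\}$ can lie in any one such interval. Write $a_m := \max_{t\in[0,1]}|f(t+m)|$ for $m\in\mathbb{Z}$. By definition, $\sum_m a_m = \|f\|_{W(C(\mathbb{R}),\ell^1(\mathbb{Z}))}$. If every interval $[m,\,m+1)$ contains at most $N$ points of the progression, then
$$
\sum_{k\in\mathbb{Z}}|f(x+Rk)| \le \sum_{m\in\mathbb{Z}}\ \sum_{k:\,x+Rk\in[m,m+1)} |f(x+Rk)| \le N\sum_{m\in\mathbb{Z}} a_m ,
$$
and the bound holds uniformly in $x$. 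So it remains to show $N \le 1/R+1$.

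First fix $x\in\mathbb{R}$. The half-open intervals $[m,\,m+1)$, $m\in\mathbb{Z}$, partition $\mathbb{R}$, so each $k$ is assigned to exactly one $m = \lfloor x+Rk\rfloor$. Regrouping the series is legitimate because all terms are nonnegative. Next, do the count. Suppose the points $x+Rk_1 < \dots < x+Rk_n$ all lie in $[m,\,m+1)$. Consecutive points differ by at least $R$, so $(n-1)R \le (x+Rk_n)-(x+Rk_1) < 1$. Hence $n < 1/R+1$, and therefore $n \le 1/R+1$. Since the bound on $N$ does not depend on $x$, taking the supremum over $x$ gives the claim.

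There is no real obstacle here. The only point needing care is boundary double counting: using closed intervals $[m,\,m+1]$ could assign a point to two intervals. Using half-open intervals for the assignment, while still bounding by the maximum over the closed interval, handles this. One could sharpen the count to $\lceil 1/R\rceil$, but the stated constant $1/R+1$ follows directly from the spacing argument.
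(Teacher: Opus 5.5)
Your counting argument is correct and complete: each half-open interval $[m,\,m+1)$ contains at most $\lceil 1/R\rceil \le 1/R+1$ points of $x+R\mathbb{Z}$, and regrouping the nonnegative series then gives the bound uniformly in $x$. The half-open bookkeeping is not even needed, since overcounting only enlarges an upper bound. The paper does not prove this lemma itself but cites Gr\"ochenig's Lemma~6.1.2, and your argument is the standard proof behind that result.
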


From Lemma~\ref{Lemma:PropWiener} with $R = \tfrac{1}{L}$, $L \in 2 \mathbb N$, it follows immediately that
\begin{equation}
	\label{eq:sum|f(Rk)|}
	\sum_{k \in \mathbb{Z}} \big|f\big(\tfrac{k}{L}\big)\big|
	\leq \left(\frac{1}{R} + 1\right)\|f\|_{W(C(\mathbb{R}),\ell^1(\mathbb{Z}))} < \infty
\end{equation}
such that $\bigl( f\big(\tfrac{k}{L}\big)\bigr)_{k \in \mathbb{Z}} \in \ell^1(\mathbb{Z})$ for each $L \in 2 \mathbb N$.
\medskip

For each $f \in W(C(\mathbb R), \ell^1(\mathbb Z))$ we introduce the \emph{decay rate of} $f$ \emph{with respect to the step size} $L \in 2\mathbb N$
by
\begin{equation}
\label{eq:decayrate}
\delta(f,L) := \max_{|x|\leq L/2} \sum_{k \in \mathbb Z \setminus \{0\}} |f(x + k L)|\,.
\end{equation}
From Lemma \ref{Lemma:PropWiener} it follows immediately that $\delta(f,L) < \infty$ for arbitrary $L \in 2 \mathbb N$.
\medskip

\begin{example}
\label{Ex:poly/expodecay}
	A function $f$ belongs to the Wiener amalgam space
	$W(C(\mathbb{R}),\ell^1(\mathbb{Z}))$ if and only if $f \in C(\mathbb{R})$ can
	be majorized by the step function $s$ with jumps at integers, given by
	$s(x) = m_k$ for $x \in [k,\,k+1)$, $k \in \mathbb{Z}$, with
	$m_k := \max_{x \in [k,\,k+1]}|f(x)|$ and $\sum_{k \in \mathbb{Z}} m_k < \infty$. Hence it holds
$$
W(C(\mathbb{R}),\ell^1(\mathbb{Z})) \subset C(\mathbb R) \cap L^1(\mathbb R)\,.
$$
	In particular, each compactly supported function $f \in C(\mathbb{R})$ belongs
	to $W(C(\mathbb{R}),\ell^1(\mathbb{Z}))$. If $f \in C(\mathbb{R})$ has
	\emph{polynomial decay}, i.e.,
	$$
	\sup_{x \in \mathbb{R}} |f(x)|\,(1 + |x|)^a \leq c < \infty\,, \quad a > 1\,,
	$$
	or \emph{exponential decay}, i.e.,
	$$
	\sup_{x \in \mathbb{R}} |f(x)|\,{\mathrm{e}}^{r\,|x|^{\alpha}} \leq d < \infty\,,
	\quad r,\,\alpha > 0\,,
	$$
	then $f$ belongs to $W(C(\mathbb{R}),\ell^1(\mathbb{Z}))$. For corresponding proofs see Lemma \ref{Lemma:polydecay} and Lemma \ref{Lemma:expodecay}.
\end{example}

\subsection{Poisson summation formula}

The key tool for estimating the uniform approximation error is the Poisson
summation formula. It allows us to express the error between $\hat{f}$ and the
trigonometric sampling polynomial $s_{P,L}$ in terms of the tails of $f$ and
$\hat{f}$, which can then be controlled by decay conditions on these functions.

\begin{lemma}$\mathrm{(see\;\cite{G96})}$
	\label{Lemma:Poissonsum}
	Let $P \in 2 \mathbb N$ be fixed. For each $f \in W(C(\mathbb{R}),\ell^1(\mathbb{Z}))$ with
	$\hat{f} \in W(C(\mathbb{R}),\ell^1(\mathbb{Z}))$, the \emph{Poisson
		summation formulae}
	\begin{eqnarray}\label{eq:Poisson1}
		\sum_{k \in \mathbb{Z}} f(x + kP) &=& \frac{1}{P}\sum_{n \in \mathbb{Z}}
		\hat{f}\!\left(\frac{n}{P}\right){\mathrm{e}}^{2\pi{\mathrm{i}}\,nx/P}\,,
		\\
		\sum_{k \in \mathbb{Z}} \hat{f}(v + kP) &=& \frac{1}{P}\sum_{n \in \mathbb{Z}}
		f\!\left(\frac{n}{P}\right){\mathrm{e}}^{-2\pi{\mathrm{i}}\,nv/P}
		\label{eq:Poisson2}
	\end{eqnarray}
	hold for all $x,\, v \in \mathbb{R}$. All
	series converge absolutely and uniformly on $\mathbb{R}$.
\end{lemma}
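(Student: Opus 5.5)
The plan is to prove \eqref{eq:Poisson1} first and then obtain \eqref{eq:Poisson2} by a symmetric argument.

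\textbf{Periodization.} Set $F(x) := \sum_{k\in\mathbb Z} f(x+kP)$. By Lemma~\ref{Lemma:PropWiener} with $R=P$, the series $\sum_k |f(x+kP)|$ is bounded uniformly in $x$ by $(1/P+1)\|f\|_{W}$. Moreover, the tails are controlled by the tails of the Wiener norm sequence $m_k = \max_{[k,k+1]}|f|$. Take $x$ in a period interval of length $P$. Then the terms with $|k|\ge K$ are dominated by a sum of $m_j$ over $|j|\gtrsim KP$. Hence the series converges absolutely and uniformly, and since each term is continuous, $F$ is a continuous $P$-periodic function.

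\textbf{Fourier coefficients of $F$.} Absolute convergence together with $f\in L^1$ (Example~\ref{Ex:poly/expodecay}) allows term-by-term integration:
$$\frac1P\int_0^P F(x)\,{\mathrm e}^{-2\pi{\mathrm i}nx/P}\,{\mathrm d}x=\frac1P\int_{\mathbb R} f(x)\,{\mathrm e}^{-2\pi{\mathrm i}nx/P}\,{\mathrm d}x=\frac1P\hat f\!\left(\frac nP\right).$$
Apply \eqref{eq:sum|f(Rk)|} to $\hat f\in W$ with $R=1/P$; this gives $\sum_n|\hat f(n/P)|<\infty$. So the Fourier series of $F$ converges absolutely and uniformly to some continuous function $G$. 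Now $F$ and $G$ are both continuous and have the same Fourier coefficients. By uniqueness of Fourier coefficients (Fejér's theorem, or completeness of the trigonometric system in $L^2$), $F=G$ everywhere. This gives \eqref{eq:Poisson1} for every $x$. Uniform absolute convergence of the right-hand side is immediate from summability of $(\hat f(n/P))_n$, and that of the left-hand side was shown above.

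\textbf{Second formula.} To get \eqref{eq:Poisson2}, we apply the same argument with the roles of $f$ and $\hat f$ exchanged. Periodize $\hat f$ and compute its Fourier coefficients: $\frac1P\int_{\mathbb R}\hat f(v)\,{\mathrm e}^{2\pi{\mathrm i}nv/P}\,{\mathrm d}v$. This equals $\frac1P f(n/P)$ by Fourier inversion, which is where the main obstacle lies. Since $f,\hat f\in W\subset L^1\cap C$, the inversion formula $f(x)=\int\hat f(v)\,{\mathrm e}^{2\pi{\mathrm i}xv}\,{\mathrm d}v$ holds pointwise for all $x$; this is the standard $L^1$ inversion theorem, valid because $f$ is continuous. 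Absolute summability of $(f(n/P))_n$ again comes from \eqref{eq:sum|f(Rk)|}, and the identification argument is then identical to the first part.

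Equivalently, one may apply \eqref{eq:Poisson1} to $g(x):=\hat f(x)$, whose Fourier transform is $\hat g(n)=f(-n)$ by inversion, and replace $n$ by $-n$. The only real care needed is justifying the interchange of summation and integration, and the pointwise inversion; both follow from the amalgam-space hypotheses on $f$ and $\hat f$.
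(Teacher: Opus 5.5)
Your proposal is correct and follows the paper's own argument. You periodize, identify the Fourier coefficients as $\frac1P\hat f(n/P)$, get their $\ell^1$-summability from the amalgam property of $\hat f$, conclude by uniqueness of Fourier coefficients, and then exchange the roles of $f$ and $\hat f$. You are in fact more careful than the paper: you spell out the tail estimate behind continuity of the periodization, and you make explicit the pointwise inversion $f(x)=\int\hat f(v)\,\mathrm e^{2\pi\mathrm i xv}\,\mathrm dv$, which the paper's ``replacing $f$ by $\hat f$'' uses tacitly.
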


\emph{Proof}. From
Lemma~\ref{Lemma:PropWiener} it follows that
$$
f_P(x) := \sum_{k \in \mathbb{Z}} f(x + kP)\,, \quad x \in \mathbb{R}\,,
$$
is a continuous, $P$-periodic function. By the assumption
$\hat{f} \in W(C(\mathbb{R}),\ell^1(\mathbb{Z}))$, it follows from
\eqref{eq:sum|f(Rk)|} that
$$
\sum_{k \in \mathbb{Z}} \left|\hat{f}\!\left(\frac{n}{P}\right)\right| < \infty\,,
$$
so that the $P$-periodic Fourier series
$\frac{1}{P}\sum_{n \in \mathbb{Z}} \hat{f}\!\left(\frac{n}{P}\right)
{\mathrm{e}}^{2\pi{\mathrm{i}}\,nx/P}$ converges absolutely and uniformly on
$\mathbb{R}$. Both functions coincide, since $f_P$ has the $P$-periodic Fourier
coefficients
$$
\frac{1}{P}\int_0^P f_P(x)\,{\mathrm{e}}^{-2\pi{\mathrm{i}}\,nx/P}\,{\mathrm{d}}x
= \frac{1}{P}\int_{\mathbb{R}} f(x)\,{\mathrm{e}}^{-2\pi{\mathrm{i}}\,nx/P}\,
{\mathrm{d}}x = \frac{1}{P}\,\hat{f}\!\left(\frac{n}{P}\right),
\quad n \in \mathbb{Z}\,.
$$
This completes the proof of \eqref{eq:Poisson1}. Replacing $f$ by $\hat{f}$
yields \eqref{eq:Poisson2}. \qedsymbol
\medskip

\begin{remark}
	By an example, Y.~Katznelson \cite{K67} has shown that the Poisson summation
	formula \eqref{eq:Poisson1} does not hold for all
	$f \in C(\mathbb{R}) \cap L^1(\mathbb{R})$ with
	$\hat{f} \in C(\mathbb{R}) \cap L^1(\mathbb{R})$, illustrating the essential
	difference between the spaces $W(C(\mathbb{R}),\ell^1(\mathbb{Z}))$ and
	$C(\mathbb{R}) \cap L^1(\mathbb{R})$.
\end{remark}
\medskip

\subsection{The trigonometric sampling polynomial and its fast evaluation}

The trigonometric sampling polynomial provides a natural approximation of the
Fourier transform $\hat{f}$ on a compact frequency interval, and can be
evaluated efficiently at arbitrary points using the NFFT. Now we make this
precise.

For $f \in W(C(\mathbb{R}), \ell^1(\mathbb{Z}))$ with
$\hat{f} \in W(C(\mathbb{R}), \ell^1(\mathbb{Z}))$ and sufficiently large
$L,\, P \in 2\mathbb{N}$, we define the $P$-\emph{periodic trigonometric
	sampling polynomial of} $f$ \emph{with sampling support} $[-L/2, L/2]$
\emph{and degree} $LP/2$ by $s_{P,L}$, see \eqref{eq:samplingpoly}.
%\begin{equation}
%	\label{eq:samplingpoly}
%	s_{P,L}(v) := \frac{1}{P}\sum_{n \in [LP]} f\!\left(\frac{n}{P}\right)
%	{\mathrm{e}}^{-2\pi{\mathrm{i}}\,nv/P}\,, \quad v \in \mathbb{R}\,.
%\end{equation}
Under suitable decay conditions on $f$ and $\hat{f}$, the polynomial
$s_{P,L}$ provides a good uniform approximation of $\hat{f}$ on the compact
interval $[-P/2,\, P/2]$, see Theorem~\ref{Thm:samplingpolynomials} and the
following results.

The values $s_{P,L}(k/L)$, $k \in [LP]$, on the uniform frequency grid can be
computed via the FFT in $\mathcal{O}(LP\log LP)$ operations, provided $L$ and
$P$ are products of powers of small primes (see \cite{FJ05}). More generally,
the trigonometric sampling polynomial $s_{P,L}$ can be evaluated at $K$ arbitrary points
$v_1,\ldots,v_K \in [-P/2,\, P/2]$ by means of the nonequispaced fast Fourier
transform (NFFT) in only $\mathcal{O}(LP\log LP + K)$ operations
(see \cite{PlPoStTa23}). Hence, for each $v$ with $|v| \leq P/2$, the value
$s_{P,L}(v)$ is a computationally efficient approximation of $\hat{f}(v)$.

\subsection{The uniform approximation error}

Let $L$, $P \in 2 \mathbb N$ be given.
By convenient decay conditions on $f$ and $\hat{f}$, we can show that
$s_{P,L}$ is a good uniform approximation of $\hat{f}$ on the compact interval
$\bigl[-\frac{P}{2},\,\frac{P}{2}\bigr]$. To this end, we use the
scaled uniform approximation error \eqref{eq:uniform}.
If we form the maximum only on the grid $\big\{\frac{k}{L}:\,k\in [L P]\big\}$, then we obtain the discretized version of the scaled uniform approximation error \eqref{eq:discreteuniform}.
We derive explicit estimates for $M_{P,L}(f)$ and $M_{P,L}^{\ast}(f)$ under various decay conditions
on $f$ and $\hat{f}$. The key decomposition is provided by
Theorem~\ref{Thm:samplingpolynomials} below, which splits the error into two
contributions: one governed by the decay of $\hat{f}$ and one by the decay of
$f$. Explicit bounds for each contribution are then given in
Lemma~\ref{Lemma:polydecay} and Lemma~\ref{Lemma:expodecay}, covering the
cases of polynomial and exponential decay respectively.

\begin{theorem}
	\label{Thm:samplingpolynomials}
	Let $L$, $P \in 2\mathbb{N}$ be fixed. Let
	$f \in W(C(\mathbb{R}), \ell^1(\mathbb{Z}))$ with
	$\hat{f} \in W(C(\mathbb{R}), \ell^1(\mathbb{Z}))$ be given.\\
Then $\hat{f}$
	can be uniformly approximated on $[-P/2, P/2]$ by the trigonometric sampling
	polynomial $s_{P,L}$, and it holds the estimate
	\begin{equation}
		\label{eq:fsuni}
		M_{P,L}^{\ast}(f) \leq M_{P,L}(f) \leq \frac{1}{\sqrt L}\, \delta(\hat f, P) + \sqrt L\,\delta(f,L)\,,
	\end{equation}
	where $\delta(\hat f, P)$ and $\delta(f,L)$ denote the corresponding decay rates \eqref{eq:decayrate}.
\end{theorem}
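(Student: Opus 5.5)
The plan is to start from the Poisson summation formula \eqref{eq:Poisson2} of Lemma~\ref{Lemma:Poissonsum} and split the error $\hat f(v)-s_{P,L}(v)$ into an aliasing part and a truncation part. The first inequality $M_{P,L}^{\ast}(f)\le M_{P,L}(f)$ is immediate, since every grid point $k/L$ with $k\in[LP]$ satisfies $|k/L|\le P/2$. For the second inequality, fix $v$ with $|v|\le P/2$. Since the series in \eqref{eq:Poisson2} converges absolutely, we may subtract $s_{P,L}(v)$ from both sides and obtain
\begin{equation*}
\hat f(v)-s_{P,L}(v) = -\sum_{k\in\mathbb Z\setminus\{0\}}\hat f(v+kP) + \frac{1}{P}\sum_{n\in\mathbb Z\setminus[LP]} f\Bigl(\frac nP\Bigr)\,{\mathrm e}^{-2\pi{\mathrm i}\,nv/P}\,.
\end{equation*}

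For the first sum, the triangle inequality together with $|v|\le P/2$ and the definition \eqref{eq:decayrate} (applied to $\hat f$ with step size $P$) gives the bound $\delta(\hat f,P)$. Multiplying by the scaling factor $1/\sqrt L$ in \eqref{eq:uniform} produces the first term in \eqref{eq:fsuni}.

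For the second sum, we regroup the indices $n\notin[LP]$ into blocks of length $LP$. Each such $n$ can be written uniquely as $n=j+kLP$ with $j\in[LP]$ and $k\in\mathbb Z\setminus\{0\}$. Then $n/P=j/P+kL$, and the point $x_j:=j/P$ satisfies $|x_j|\le L/2$. This yields
\begin{equation*}
\frac1P\sum_{n\notin[LP]}\Bigl|f\Bigl(\frac nP\Bigr)\Bigr| = \frac1P\sum_{j\in[LP]}\sum_{k\ne0}|f(x_j+kL)| \le \frac1P\cdot LP\cdot\delta(f,L) = L\,\delta(f,L)\,,
\end{equation*}
where we bounded each inner sum by $\delta(f,L)$. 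After multiplying by $1/\sqrt L$ this gives $\sqrt L\,\delta(f,L)$. Taking the maximum over $|v|\le P/2$ then completes the proof of \eqref{eq:fsuni}.

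The only step needing care is the reindexing of the truncation tail. One has to check that the representation $n=j+kLP$ covers every $n\notin[LP]$ exactly once, with $k\neq 0$, so that no term is lost or double counted. One also has to confirm that the endpoint convention of $[LP]$ (the half-open window $-LP/2<j\le LP/2$) still gives $|j/P|\le L/2$, so that the maximum over $|x|\le L/2$ in $\delta(f,L)$ applies. Both points follow directly from the definition of $[LP]$. Finally, the crude count $LP$ of terms $j\in[LP]$ is exactly what produces the factor $\sqrt L$ after scaling.
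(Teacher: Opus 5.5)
Your proposal is correct and follows the paper's proof essentially step for step. It uses the same splitting of \eqref{eq:Poisson2} into the aliasing remainder (bounded by $\delta(\hat f,P)$) and the truncation remainder, and the same reindexing $n=j+kLP$ with the count $LP$ giving $L\,\delta(f,L)$; your explicit checks of the endpoint convention of $[LP]$ and of $M_{P,L}^{\ast}(f)\le M_{P,L}(f)$ are details the paper leaves implicit.
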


\emph{Proof}. Splitting both series in the second Poisson summation formula \eqref{eq:Poisson2} into the principal
terms and the remainders, we obtain for all $v \in \mathbb{R}$
$$
\hat{f}(v) + \sigma_1(v)= s_{P,L}(v) + \sigma_2(v)\,,
$$
with
\begin{eqnarray*}
\sigma_1(v) &:=& \sum_{k \in \mathbb{Z} \setminus \{0\}} \hat{f}(v + k P)\,, \\
\sigma_2(v) &:=& \frac{1}{P}\sum_{n \in \mathbb{Z} \setminus [LP]} f\!\left(\frac{n}{P}\right){\mathrm{e}}^{-2\pi{\mathrm{i}}\,nv/P}\,.
\end{eqnarray*}
Hence this provides that
$$
M_{P,L}^{\ast}(f) \leq M_{P,L}(f) \leq \frac{1}{\sqrt L} \max_{|v|\leq P/2} |\sigma_1(v)| + \frac{1}{\sqrt L} \max_{|v|\leq P/2} |\sigma_2(v)|\,.
$$
Applying the triangle inequality and \eqref{eq:decayrate}, we obtain
$$
\max_{|v|\leq P/2} |\sigma_1(v)| \leq \max_{|v| \leq P/2} \sum_{k \in \mathbb{Z} \setminus \{0\}} |\hat{f}(v + k P)| = \delta(\hat f, P)\,.
$$
In the expression $\sigma_2$, the
summation indices $n \in \mathbb{Z} \setminus [LP]$ can be written in the form
$n = m + kLP$ with $m \in [LP]$ and $k \in \mathbb{Z} \setminus \{0\}$, such that
$$
\sigma_2(v) = \frac{1}{P}\sum_{m \in [LP]}\,
\sum_{k \in \mathbb{Z} \setminus \{0\}}
f\!\left(\frac{m}{P} + kL\right){\mathrm{e}}^{-2\pi{\mathrm{i}}\,mv/P}\,{\mathrm{e}}^{-2\pi{\mathrm{i}}\,k L v} \,.
$$
Using triangle inequality and \eqref{eq:decayrate}, this yields
$$
\max_{|v| \leq P/2} |\sigma_2(v)| \leq \frac{L P}{P}\, \max_{m \in [L P]} \sum_{k \in \mathbb{Z} \setminus \{0\}}\big|f\!\left(\frac{m}{P} + kL\right)\big| \leq L\,\delta(f,L)\,.
$$
This completes the proof. \qedsymbol
\medskip

\begin{remark}
	\label{Rem:maxPoisson1}
	For given $f,\, \hat{f} \in W(C(\mathbb{R}),\ell^1(\mathbb{Z}))$, we can
	analogously estimate the maximum approximation error
	\begin{equation}
		\label{eq:maxerror1}
		\frac{1}{\sqrt L}\,\max_{|x| \leq P/2} \left|f(x) - \frac{1}{P}\sum_{n \in [LP]}
		\hat{f}\!\left(\frac{n}{P}\right){\mathrm{e}}^{2\pi{\mathrm{i}}\,n x/P}\right|
	\end{equation}
	by applying the first Poisson summation formula \eqref{eq:Poisson1}. The
	error \eqref{eq:maxerror1} has the upper bound
	$$
	\frac{1}{\sqrt L}\, \delta(f,P) + \sqrt L \, \delta(\hat f, L)\,,
	$$
	which can be further estimated under concrete decay conditions on $f$ and
	$\hat{f}$ by Lemma~\ref{Lemma:polydecay} and Lemma~\ref{Lemma:expodecay}.
For the discretized version of \eqref{eq:maxerror1} we obtain
$$
\frac{1}{\sqrt L}\,\max_{\ell \in [L P]} \left|f\left(\frac{\ell}{L}\right) - \frac{1}{P}\sum_{n \in [LP]}
		\hat{f}\!\left(\frac{n}{P}\right){\mathrm{e}}^{2\pi{\mathrm{i}}\,n \ell/(L P)}\right|  \leq \frac{1}{\sqrt L}\, \delta(f,P) + \sqrt L \, \delta(\hat f, L)\,.
$$
\end{remark}
\medskip

\begin{remark}
	\label{Rem:inverse}
	The reconstruction in Remark~\ref{Rem:maxPoisson1} presupposes the exact
	Fourier samples $\hat f(n/P)$, $n \in [LP]$. In practice, however, only the
	computed trigonometric sampling polynomial $s_{P,L}$, see
	\eqref{eq:samplingpoly}, is available. We therefore sample $s_{P,L}$ on the
	finer equispaced grid $\bigl\{j/M:\, j \in [MP]\bigr\}$ of the frequency
	interval $[-P/2,\, P/2)$, where $M \in 2\mathbb{N}$ with $M \geq 2L$ denotes a
	frequency oversampling factor, and form the fully computable approximation
	\begin{equation}
		\label{eq:invapprox}
		f_{M,P,L}(x) := \frac{1}{M} \sum_{j \in [MP]}
		s_{P,L}\!\left(\frac{j}{M}\right) {\mathrm{e}}^{2\pi{\mathrm{i}}\,jx/M}\,,
		\qquad x \in [-L/2,\, L/2)\,.
	\end{equation}
	This is the $M$-point rectangle-rule discretization of the truncated inverse
	Fourier integral $\int_{-P/2}^{P/2} s_{P,L}(v)\,{\mathrm{e}}^{2\pi{\mathrm{i}}\,vx}\,
	{\mathrm{d}}v$; on the grid $x = \ell/P$, $\ell \in [LP]$, it can be evaluated by a
	single inverse \FFT{} of length $MP$.

	Introducing the truncated cardinal series
	\begin{equation}
		\label{eq:gPL}
		g_{P,L}(x) := \int_{-P/2}^{P/2} s_{P,L}(v)\,{\mathrm{e}}^{2\pi{\mathrm{i}}\,vx}\,
		{\mathrm{d}}v = \sum_{n \in [LP]} f\!\left(\frac{n}{P}\right)
		{\mathrm{sinc}}\bigl(Px - n\bigr)\,, \qquad
		{\mathrm{sinc}}(t) := \frac{\sin(\pi t)}{\pi t}\,,
	\end{equation}
	the approximation error splits into three contributions,
	\begin{align*}
	f(x) - f_{M,P,L}(x) &= \underbrace{\int_{|v| > P/2} \hat f(v)\,
		{\mathrm{e}}^{2\pi{\mathrm{i}}\,vx}\,{\mathrm{d}}v}_{\text{band truncation}}
	+ \underbrace{\int_{-P/2}^{P/2} \bigl(\hat f - s_{P,L}\bigr)(v)\,
		{\mathrm{e}}^{2\pi{\mathrm{i}}\,vx}\,{\mathrm{d}}v}_{\text{sampling polynomial}}\\
	&+ \underbrace{\bigl(g_{P,L}(x) - f_{M,P,L}(x)\bigr)}_{\text{frequency discretization}}\,.
		\end{align*}
	The first term is bounded by the $L^1$-tail
	$\eta(\hat f, P) := \int_{|v| > P/2} |\hat f(v)|\,{\mathrm{d}}v$, the second by
	$P\,\max_{|v| \leq P/2} |\hat f(v) - s_{P,L}(v)| = P\sqrt L\,M_{P,L}(f)$, see
	\eqref{eq:uniform}. For the third term, a direct evaluation of the geometric sum
	in \eqref{eq:invapprox} gives the closed form
	$$f_{M,P,L}(x) = \frac{1}{MP}\sum_{n \in [LP]} f(n/P)\,
	{\mathrm{e}}^{{\mathrm{i}}\pi\alpha_n/M}\,
	\frac{\sin(\pi P\alpha_n)}{\sin(\pi\alpha_n/M)}$$ with $\alpha_n := x - n/P$, hence
	$$
	g_{P,L}(x) - f_{M,P,L}(x) = \frac{1}{P} \sum_{n \in [LP]} f\!\left(\frac{n}{P}\right)
	\sin(\pi P\alpha_n)\left[\frac{1}{\pi\alpha_n}
	- \frac{{\mathrm{e}}^{{\mathrm{i}}\pi\alpha_n/M}}{M\,\sin(\pi\alpha_n/M)}\right].
	$$
	Since $|\sin(\pi P\alpha_n)| \leq 1$ and, by the elementary inequality
	$1 - \beta\cot\beta \leq \beta^2$ for $|\beta| \leq \pi/2$, the bracket is
	bounded in modulus by $M^{-1}\sqrt{1 + \pi^2/4}$ whenever $|\alpha_n| \leq M/2$
	(which holds for $|x| \leq L/2$ as $M \geq 2L$), we obtain with
	Lemma~\ref{Lemma:PropWiener} (for $R = 1/P$ and $|x| \leq L/2$)
	$$
	\bigl|g_{P,L}(x) - f_{M,P,L}(x)\bigr| \leq
	\frac{\sqrt{1 + \pi^2/4}}{M P} \sum_{n \in [LP]} \left|f\!\left(\frac{n}{P}\right)\right|
	\leq \frac{(P+1)\sqrt{1 + \pi^2/4}}{M P}\,
	\|f\|_{W(C(\mathbb{R}),\ell^1(\mathbb{Z}))}\,.
	$$
	Combining the three bounds with Theorem~\ref{Thm:samplingpolynomials} and
	$(P+1)/P \leq 2$ yields the uniform error estimate
	\begin{equation}
		\label{eq:inverr}
		\max_{|x| \leq L/2} \bigl|f(x) - f_{M,P,L}(x)\bigr| \leq
		\eta(\hat f, P) + P\,\bigl(\delta(\hat f, P) + L\,\delta(f, L)\bigr)
		+ \frac{2\sqrt{1 + \pi^2/4}}{M}\,\|f\|_{W(C(\mathbb{R}),\ell^1(\mathbb{Z}))}\,.
	\end{equation}
	The three terms reflect, respectively, the truncation of the inverse transform
	to the band $[-P/2,\, P/2]$, the approximation of $\hat f$ by $s_{P,L}$ on that
	band, and the rectangle-rule discretization in the frequency variable.

	If $\hat f$ has polynomial decay $\sup_{v} |\hat f(v)|\,(1 + |v|)^b \leq c < \infty$
	with $b > 1$, then $\eta(\hat f, P) \leq \frac{2c}{b-1}\,(1 + P/2)^{1-b}$, and with
	Lemma~\ref{Lemma:polydecay} the estimate \eqref{eq:inverr} becomes
	\begin{eqnarray}
		\nonumber
		\max_{|x| \leq L/2} \bigl|f(x) - f_{M,P,L}(x)\bigr| &\leq&
		\frac{2c}{b-1}\left(1 + \frac{P}{2}\right)^{1-b}
		+ 2c\,(2^b - 1)\,\zeta(b)\,P^{1-b}
		+ P L\,\delta(f, L)\\
		&+&\label{eq:inverrpoly}  \frac{2\sqrt{1 + \pi^2/4}}{M}\,\|f\|_{W(C(\mathbb{R}),\ell^1(\mathbb{Z}))}\,.
		\end{eqnarray}
	For compactly supported $f$, where $\delta(f, L) = 0$ for sufficiently large $L$,
	the spatial error therefore decays as $\mathcal{O}(P^{1-b}) + \mathcal{O}(M^{-1})$,
	and balancing the two contributions suggests the choice $M \gtrsim P^{\,b-1}$. We
	remark that the leading $M^{-1}$-coefficient is in fact governed by
	$|s_{P,L}(P/2)| \approx |\hat f(P/2)|$, which is already small at the band edge, so
	that in practice the frequency discretization is markedly less critical than the
	truncation with respect to $P$.
\end{remark}
\medskip

\begin{remark}
	\label{Rem:smoothness}
	Roughly speaking, smoothness of $f$ implies polynomial decay of $\hat{f}$,
	and smoothness of $\hat{f}$ implies polynomial decay of $f$. If each
	derivative $f^{(\ell)}$, $\ell = 0,\ldots,b$, belongs to $L^1(\mathbb{R})$,
	then the Fourier transform of $f^{(\ell)}$ equals
	$(2\pi v\,{\mathrm{i}})^{\ell}\,\hat{f}(v)$ and is bounded on $\mathbb{R}$,
	so that
	$$
	\sup_{v \in \mathbb{R}} |\hat{f}(v)|\,(1 + |v|)^b \leq c < \infty\,.
	$$
	Conversely, if each derivative $\hat{f}^{(\ell)}$, $\ell = 0,\ldots,b$,
	belongs to $L^1(\mathbb{R})$, then the inverse Fourier transform of
	$\hat{f}^{(\ell)}$ equals $(-2\pi{\mathrm{i}}\,x)^{\ell}\,f(x)$ and is
	bounded on $\mathbb{R}$, so that
	$$
	\sup_{x \in \mathbb{R}} |f(x)|\,(1 + |x|)^b \leq c < \infty\,.
	$$
\end{remark}
\medskip

Now we estimate the decay rate \eqref{eq:decayrate} of a function $f \in C(\mathbb R)$ with polynomial decay.

\begin{lemma}
	\label{Lemma:polydecay}
	Let $L \in 2\mathbb{N}$ be given. Assume that $f \in C(\mathbb{R})$
	has polynomial decay
	$$
	\sup_{x \in \mathbb{R}} |f(x)|\,(1 + |x|)^a \leq c < \infty\,,
	\quad a > 1\,.
	$$
	Then the decay rate of $f$ with respect step size $L$ can be estimated by
	\begin{equation}
		\label{eq:deltapoly}
		\delta(f,L) \leq 2c\,(2^a - 1)\,\zeta(a)\,L^{-a}\,,
	\end{equation}
	where $\zeta(a)$ denotes the \emph{Riemann zeta function}
	$$
	\zeta(a) := \sum_{n=0}^{\infty} (n+1)^{-a} < 1 + \frac{1}{a-1}\,,
	\quad a > 1\,.
	$$
Further $f$ belongs to $W(C(\R),\ell^1(\Z))$ with the norm
$$
\| f\|_{W(C(\R),\ell^1(\Z))} \leq 2c\,\zeta(a) < \infty\,.
$$
\end{lemma}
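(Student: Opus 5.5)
For $|x|\le L/2$ and $k\neq 0$ we have $|x+kL|\ge (|k|-\tfrac12)L$. Hence $1+|x+kL|\ge 1+(|k|-\tfrac12)L$. Using the decay hypothesis, every term of the sum in \eqref{eq:decayrate} is controlled by
$$
|f(x+kL)|\le c\,\bigl(1+(|k|-\tfrac12)L\bigr)^{-a}\le c\,\Bigl(\frac{(2|k|-1)L}{2}\Bigr)^{-a}=c\,2^a L^{-a}(2|k|-1)^{-a}.
$$
The terms with $k$ and $-k$ get the same bound, so we sum over $k\ge1$ and double. This gives
$$
\delta(f,L)\le 2c\,2^a L^{-a}\sum_{k=1}^\infty (2k-1)^{-a}.
$$

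The odd-reciprocal sum is classical: $\sum_{k\ge1}(2k-1)^{-a}=\zeta(a)-2^{-a}\zeta(a)=(1-2^{-a})\zeta(a)$. Multiplying by $2^a$ gives $(2^a-1)\zeta(a)$, which is exactly \eqref{eq:deltapoly}. The maximum over $|x|\le L/2$ causes no difficulty because the bound is uniform in $x$.

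The side remark $\zeta(a)<1+\frac1{a-1}$ follows from the integral comparison $\sum_{n\ge1}(n+1)^{-a}<\int_0^\infty(t+1)^{-a}\,dt=\frac1{a-1}$.

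For the Wiener norm we bound $\max_{x\in[0,1]}|f(x+k)|$ by $c\,(1+\min_{x\in[k,k+1]}|x|)^{-a}$. For $k\ge0$ the minimum is $k$, giving $c(1+k)^{-a}$. For $k\le-1$ the interval is $[k,k+1]$ with minimum modulus $|k+1|=|k|-1$, giving $c\,|k|^{-a}$. Summing over both ranges gives $c\,\zeta(a)+c\sum_{m\ge1}m^{-a}=2c\,\zeta(a)$, as claimed. Continuity of $f$ is assumed, so $f\in W(C(\mathbb R),\ell^1(\mathbb Z))$ follows.

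The only delicate point is the boundary case $|x|=L/2$. There the distance to $kL$ is exactly $(|k|-\frac12)L$, which is why the odd numbers $2|k|-1$ and the factor $2^a-1$ appear rather than a cruder constant. Everything else is routine summation.
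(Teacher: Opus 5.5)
Your proof is correct and is essentially the paper's argument: the same pointwise bound $|x+kL|\ge(|k|-\tfrac12)L$, the same reduction of the tail sum to $2^a\sum_{k\ge1}(2k-1)^{-a}=(2^a-1)\,\zeta(a)$ (which the paper phrases via the Hurwitz zeta value $\zeta(a,\tfrac12)$), the same integral comparison for $\zeta(a)<1+\frac{1}{a-1}$, and the same split of the Wiener norm into $k\ge0$ and $k\le-1$. One small inaccuracy in your closing aside: at $|x|=L/2$ the distance $|x+kL|$ equals $(|k|-\tfrac12)L$ only when $k$ has the opposite sign to $x$, which does not affect the proof.
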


\emph{Proof}.
By the polynomial decay of $f$, for all $k \in \mathbb{Z} \setminus \{0\}$
and $|x| \leq L/2$ it holds that
$$
\bigl|f(x + kL)\bigr| \leq c\,\bigl(1 + |x + kL|\bigr)^{-a}
\leq c\,\bigl(1 + \bigl(|k| - \tfrac{1}{2}\bigr)L\bigr)^{-a}\,.
$$
Hence it follows that
\begin{eqnarray*}
	\delta(f,L) &=& \max_{|x|\leq L/2} \sum_{k \in \mathbb{Z} \setminus \{0\}} \bigl|f(x + k L)\bigr| \\
	&\leq& 2c\sum_{k=1}^{\infty}
	\bigl(1 + \bigl(k - \tfrac{1}{2}\bigr)L\bigr)^{-a} = 2c\,L^{-a}\sum_{k=1}^{\infty} \bigl(k - \tfrac{1}{2} + \tfrac{1}{L}\bigr)^{-a} \\
	&<& 2c\,L^{-a}\sum_{n=0}^{\infty} \bigl(n + \tfrac{1}{2}\bigr)^{-a}
	= 2c\,\zeta\!\left(a, \tfrac{1}{2}\right)\,L^{-a}\,,
\end{eqnarray*}
where $\zeta\!\left(a, \frac{1}{2}\right) := \sum_{n=0}^{\infty}
\bigl(n + \frac{1}{2}\bigr)^{-a}$ denotes the \emph{Hurwitz zeta function}.
By $\zeta\!\left(a, \frac{1}{2}\right) = 2^a\sum_{n=0}^{\infty}(2n+1)^{-a}$
we obtain
\begin{eqnarray*}
	\zeta(a) &=& \sum_{n=0}^{\infty}(2n+1)^{-a} + \sum_{n=0}^{\infty}(2n+2)^{-a}
	= \sum_{n=0}^{\infty}(2n+1)^{-a} + 2^{-a}\sum_{n=0}^{\infty}(n+1)^{-a} \\
	&=& 2^{-a}\,\zeta\!\left(a,\tfrac{1}{2}\right) + 2^{-a}\,\zeta(a)
\end{eqnarray*}
such that $\zeta\!\left(a,\frac{1}{2}\right) = (2^a - 1)\,\zeta(a)$, which
gives \eqref{eq:deltapoly}. For $a \in 2\mathbb{N}$, explicit values of
$\zeta(a)$ are known, namely $\zeta(a) = \frac{|B_a|}{2\,a!}\,(2\pi)^a$ with
the \emph{Bernoulli number} $B_a$ (see \cite[pp.~266--267]{Ap76}). The
integral test for convergence of series provides that for $a > 1$,
$$
\zeta(a) = \sum_{n=0}^{\infty}(n+1)^{-a} \leq 1 + \int_0^{\infty}(x+1)^{-a}\,
{\mathrm{d}}x = 1 + \frac{1}{a-1}\,.
$$
Obviously, we have $f \in W(C(\R),\ell^1(\Z))$, since it holds by assumption that
$$
\| f\|_{W(C(\R),\ell^1(\Z))} \leq c \sum_{n=0}^{\infty} (1+n)^{-a} + c \sum_{n=1}^{\infty}\big(1 + (n-1)\big)^{-a} = 2c\,\zeta(a) < \infty\,.
$$
This completes the proof. \qedsymbol
\medskip

Using the corresponding decay rates of $f$ and $\hat f$ which have both polynomial decays, we can estimate the scaled uniform maximum error $M_{P,L}(f)$:

\begin{theorem}
	\label{Thm:polydecay}
	Let $L,\, P \in 2\mathbb{N}$ be given. If both functions
	$f,\, \hat{f} \in C(\mathbb{R})$ have polynomial decays
	\begin{eqnarray*}
		\sup_{x \in \mathbb{R}} |f(x)|\,(1 + |x|)^a &\leq& c < \infty\,,
		\quad a > 1\,,\\
		\sup_{v \in \mathbb{R}} |\hat{f}(v)|\,(1 + |v|)^b &\leq& d < \infty\,,
		\quad b > 1\,,
	\end{eqnarray*}
	then it holds the error estimate
	\begin{equation}
		\label{eq:maxerrorpolydecay}
		M_{P,L}^{\ast}(f) \leq M_{P,L}(f) \leq 2d\,(2^b - 1)\,\zeta(b)\,L^{-1/2}\,P^{-b}
		+ 2c\,(2^a - 1)\,\zeta(a)\,L^{-a + 1/2}\,.
	\end{equation}
\end{theorem}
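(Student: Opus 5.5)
The plan is to combine Theorem~\ref{Thm:samplingpolynomials} with Lemma~\ref{Lemma:polydecay}, applied once to $f$ and once to $\hat f$. The first step is to check the hypotheses of Theorem~\ref{Thm:samplingpolynomials}. Both $f$ and $\hat f$ are continuous and have polynomial decay with exponents $a>1$ and $b>1$. By the last assertion of Lemma~\ref{Lemma:polydecay}, both therefore belong to $W(C(\mathbb R),\ell^1(\mathbb Z))$, with norms bounded by $2c\,\zeta(a)$ and $2d\,\zeta(b)$. So the Poisson summation formula \eqref{eq:Poisson2} is available, and \eqref{eq:fsuni} gives
$$
M_{P,L}^{\ast}(f)\le M_{P,L}(f)\le \frac{1}{\sqrt L}\,\delta(\hat f,P)+\sqrt L\,\delta(f,L)\,.
$$

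The second step bounds the two decay rates. Applying Lemma~\ref{Lemma:polydecay} to $f$ with step size $L\in 2\mathbb N$ gives $\delta(f,L)\le 2c\,(2^a-1)\,\zeta(a)\,L^{-a}$. Applying the same lemma to $\hat f$, with $c,a,L$ replaced by $d,b,P$, gives $\delta(\hat f,P)\le 2d\,(2^b-1)\,\zeta(b)\,P^{-b}$. This substitution is legitimate because the lemma only uses continuity and the decay bound of the function, together with the fact that the step size lies in $2\mathbb N$, and $P\in 2\mathbb N$.

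The final step inserts these bounds into the inequality above. The factor $L^{-1/2}$ multiplies the $\hat f$-term, giving $2d\,(2^b-1)\,\zeta(b)\,L^{-1/2}P^{-b}$. The factor $L^{1/2}$ multiplies $L^{-a}$ in the $f$-term, giving $2c\,(2^a-1)\,\zeta(a)\,L^{-a+1/2}$. Together these are exactly \eqref{eq:maxerrorpolydecay}.

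I expect no serious obstacle. The only point that needs care is the Wiener amalgam membership of $\hat f$, which is required for the Poisson formula and is not part of the hypotheses as stated. It is supplied by the norm estimate in Lemma~\ref{Lemma:polydecay}, and that estimate should be cited explicitly.
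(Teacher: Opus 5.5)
Your proposal is correct and follows essentially the same route as the paper: the estimate \eqref{eq:fsuni} from Theorem~\ref{Thm:samplingpolynomials}, followed by Lemma~\ref{Lemma:polydecay} applied to $f$ with step size $L$ and to $\hat f$ with step size $P$. Your explicit citation of the Wiener amalgam norm bound in Lemma~\ref{Lemma:polydecay} is good practice. The paper leaves that step implicit, but it is needed to meet the hypotheses of Theorem~\ref{Thm:samplingpolynomials}.
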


\emph{Proof}. Using Theorem~\ref{Thm:samplingpolynomials}, it follows that
$$
M_{P,L}^{\ast}(f) \leq M_{P,L}(f) \leq \frac{1}{\sqrt L}\,\delta(\hat f, P) + \sqrt L\,\delta(f,L)\,.
$$
Then Lemma~\ref{Lemma:polydecay} provides that
\begin{eqnarray*}
	\delta(\hat{f},P) &\leq& 2d\,(2^b - 1)\,\zeta(b)\,P^{-b}\,,\\
	\delta(f, L) &\leq& 2c\,(2^a - 1)\,\zeta(a)\,L^{-a}\,.
\end{eqnarray*}
This completes the proof. \qedsymbol
\medskip

\begin{remark}
	We compare Theorem~\ref{Thm:polydecay} with the corresponding result of
	\cite[Theorem~2.1]{EhGrKl24}. From \cite[Theorem~2.1]{EhGrKl24} it follows
	that
	$$
	M_{P,L}^{\ast}(f) \leq E_{P,L}^{\ast}(f) \leq C\,(L^{-\alpha} + P^{-\beta})
	$$
	with arbitrary $\alpha \in \big(0,\, a - \frac{1}{2}\big)$,
	$\beta \in \big(0,\, b - \frac{1}{2}\big)$, and some constant $C > 0$.
	Our estimate \eqref{eq:maxerrorpolydecay} is explicit and more practicable,
	since all constants are described directly by the assumptions of
	Theorem~\ref{Thm:polydecay}\new{, and it does not lose an arbitrarily small
	$\varepsilon>0$ in the exponents}. Moreover, using Theorem~\ref{Thm:polydecay},
	the scaled Euclidean approximation error can be bounded by
	$$
	E_{P,L}^{\ast}(f) \leq \sqrt{LP}\,M_{P,L}^{\ast}(f)
	\leq 2d\,(2^b - 1)\,\zeta(b)\,\new{P^{-b+1/2}}
		+ 2c\,(2^a - 1)\,\zeta(a)\,L^{1 -a}\,P^{1/2}\,.
	$$
\end{remark}
\medskip

\new{
The estimate \eqref{eq:maxerrorpolydecay} is not symmetric in the two
parameters: the decay of $\hat f$ enters with the full exponent $b$, whereas
the decay of $f$ enters with the reduced exponent $a - \frac12$. Since the
corresponding bound of \cite[Theorem~2.1]{EhGrKl24} \emph{is} symmetric, we
explain in the following two remarks where this difference comes from. It turns
out that it is caused neither by the proof technique nor by the normalization,
but by a genuine structural difference between the two error contributions
$\sigma_1$ and $\sigma_2$ in the proof of
Theorem~\ref{Thm:samplingpolynomials}.

\begin{remark}
\label{Rem:symmetry}
Let $f$, $\hat f$ have polynomial decay with exponents $a$, $b$ as in
Theorem~\ref{Thm:polydecay}. Consider the two remainders
$$
\sigma_1(v) = \sum_{k \in \mathbb{Z}\setminus\{0\}} \hat f(v + kP)\,,
\qquad
\sigma_2(v) = \frac{1}{P} \sum_{n \in \mathbb{Z}\setminus[LP]}
f\!\left(\frac{n}{P}\right){\mathrm e}^{-2\pi{\mathrm i}\,nv/P}
$$
separately, and compare their maximum norm on $[-P/2,\,P/2]$ with their
quadratic mean
$\bigl(\frac{1}{P}\int_{-P/2}^{P/2}|\sigma_j(v)|^2\,{\mathrm d}v\bigr)^{1/2}$.

The \emph{aliasing} remainder $\sigma_1$ is a periodization of $\hat f$. It is
non-oscillatory, and both its maximum and its quadratic mean are of the exact
order $P^{-b}$; the quotient of the two is bounded above and below by absolute
constants.

The \emph{truncation} remainder $\sigma_2$ behaves completely differently. By
the orthogonality of the exponentials ${\mathrm e}^{-2\pi{\mathrm i}\,nv/P}$ on
$[-P/2,\,P/2]$ we have exactly
$$
\frac{1}{P}\int_{-P/2}^{P/2} |\sigma_2(v)|^2\,{\mathrm d}v
= \frac{1}{P^2} \sum_{n \in \mathbb{Z}\setminus[LP]}
\Bigl|f\!\left(\frac{n}{P}\right)\Bigr|^2
= \Theta\bigl(P^{-1}\,L^{1-2a}\bigr)\,,
$$
so that the quadratic mean of $\sigma_2$ is of the order
$P^{-1/2}\,L^{1/2-a}$, whereas for nonnegative $f$ at $v = 0$ all summands add
up constructively and give
$$
\max_{|v| \le P/2}|\sigma_2(v)| \ \ge\ |\sigma_2(0)|
= \frac{1}{P} \sum_{n \in \mathbb{Z}\setminus[LP]}
f\!\left(\frac{n}{P}\right)
= \Theta\bigl(L^{1-a}\bigr)\,.
$$
Here the two-sided order estimates presuppose a matching two-sided polynomial
decay $c_0\,(1+|x|)^{-a} \le |f(x)| \le c\,(1+|x|)^{-a}$; the lower estimate for
$|\sigma_2(0)|$ is exactly the one established in Theorem~\ref{Thm:lowerbound}.
Hence the maximum of $\sigma_2$ exceeds its quadratic mean by the factor
$\sqrt{LP}$, while for $\sigma_1$ the two agree up to constants.

This is the source of the asymmetry. Passing from an averaged error to a
uniform error costs nothing in the aliasing part but costs the full factor
$\sqrt{LP}$ in the truncation part, and no symmetric bound for the uniform
error can therefore exist. We add that after the normalization by $L^{-1/2}$
both error measures predict \emph{the same} rate in the truncation parameter,
namely $L^{1/2-a}$; the difference between \eqref{eq:maxerrorpolydecay} and
\cite[Theorem~2.1]{EhGrKl24} is confined to the aliasing parameter, where our
bound is better by the factor $\sqrt{L P}$. This is confirmed numerically in
Example~\ref{Ex:Ldependence} and Figure~\ref{fig:EstarM}.
\end{remark}
\medskip

\begin{remark}
\label{Rem:notequivalent}
The two error measures $M_{P,L}^{\ast}(f)$ and $E_{P,L}^{\ast}(f)$ are
\emph{not} equivalent. Indeed, for any $f$ we have the elementary two-sided
estimate
\begin{equation}
\label{eq:MstarEstar}
1 \;\le\; \frac{E_{P,L}^{\ast}(f)}{M_{P,L}^{\ast}(f)} \;\le\; \sqrt{LP}\,,
\end{equation}
since a maximum of $LP$ numbers is at most their Euclidean norm and at least
$(LP)^{-1/2}$ times that norm. Both bounds in \eqref{eq:MstarEstar} are
essentially attained:
\begin{itemize}
\item If the error is \emph{truncation dominated}, i.e.\ if
$\delta(\hat f, P) \ll L\,\delta(f,L)$, then by
Remark~\ref{Rem:symmetry} the quotient stays bounded. For the bandlimited
function $g$ of Example~\ref{Ex:Ldependence} one finds numerically
$E_{P,L}^{\ast}(g)/M_{P,L}^{\ast}(g) \to 1.244$ as $L \to \infty$,
independently of $P$.
\item If the error is \emph{aliasing dominated}, i.e.\ if
$L\,\delta(f,L) \ll \delta(\hat f,P)$, the quotient grows like $\sqrt{LP}$.
For the compactly supported B-spline $M_2$ with $L = 2$ one measures
$E_{P,L}^{\ast}/M_{P,L}^{\ast} = 22.18$ at $P = 1024$, while
$\sqrt{LP} = 45.25$; see Figure~\ref{fig:EstarM}.
\end{itemize}
Consequently, an estimate for $E_{P,L}^{\ast}(f)$ may overestimate
$M_{P,L}^{\ast}(f)$ by a factor which grows without bound, and
Theorem~\ref{Thm:polydecay} cannot be recovered from
\cite[Theorem~2.1]{EhGrKl24}. We stress in addition that
$M_{P,L}(f)$, which controls the error at \emph{every} $v$ with
$|v| \le P/2$, is not comparable to $E_{P,L}^{\ast}(f)$ at all, since the
latter contains no information about the values of $\hat f - s_{P,L}$ off the
grid $\{k/L : k \in [LP]\}$.
\end{remark}
\medskip
}

As known, a nontrivial function cannot be bandlimited and space-limited (see \cite[pp.~103--104]{PlPoStTa23}).
If $f$ is space-limited or bandlimited, then we obtain simpler error estimates:

\begin{corollary}
	\label{Cor:bandlimited}
	Let $L,\, P \in 2\mathbb{N}$ be given. If $f,\, \hat{f} \in C(\mathbb{R})$
	satisfy
	\begin{eqnarray*}
		\sup_{x \in \mathbb{R}} |f(x)|\,(1 + |x|)^a &\leq& c < \infty\,,
		\quad a > 1\,,\\
		\mathrm{supp}\,\hat{f} &\subseteq&
		\bigl[-\tfrac{M}{2},\,\tfrac{M}{2}\bigr]\,,
		\quad 0 < M \leq P\,,
	\end{eqnarray*}
	then it holds the error estimate
	$$
	M_{P,L}^{\ast}(f) \leq M_{P,L}(f) \leq 2c\,(2^a - 1)\,\zeta(a)\,L^{1/2-a}\,.
	$$
	If $f,\, \hat{f} \in C(\mathbb{R})$ satisfy
	\begin{eqnarray*}
		\mathrm{supp}\,f &\subseteq&
		\bigl[-\tfrac{M}{2},\,\tfrac{M}{2}\bigr]\,,
		\quad 0 < M \leq L\,,\\
		\sup_{v \in \mathbb{R}} |\hat{f}(v)|\,(1 + |v|)^b &\leq& d < \infty\,,
		\quad b > 1\,,
	\end{eqnarray*}
	then it holds the error estimate
	$$
	M_{P,L}^{\ast}(f) \leq M_{P,L}(f) \leq 2d\,(2^b - 1)\,\zeta(b)\,L^{-1/2}\,P^{-b}\,.
	$$
\end{corollary}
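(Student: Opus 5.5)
The plan is to reduce both statements to Theorem~\ref{Thm:samplingpolynomials}, which gives $M_{P,L}^{\ast}(f)\le M_{P,L}(f)\le L^{-1/2}\delta(\hat f,P)+\sqrt L\,\delta(f,L)$. Then, in each case, I show that one of the two decay rates vanishes because of the support assumption and bound the other one by Lemma~\ref{Lemma:polydecay}. Before that I check that the hypotheses of Theorem~\ref{Thm:samplingpolynomials} hold, i.e.\ that $f,\hat f\in W(C(\mathbb R),\ell^1(\mathbb Z))$. The function with polynomial decay lies in the amalgam space by Lemma~\ref{Lemma:polydecay}. The compactly supported continuous function lies there by Example~\ref{Ex:poly/expodecay}.

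\emph{First case} ($\mathrm{supp}\,\hat f\subseteq[-M/2,M/2]$ with $M\le P$). I claim $\delta(\hat f,P)=0$. Take $|v|\le P/2$ and $k\in\mathbb Z\setminus\{0\}$. Then $|v+kP|\ge |k|P-|v|\ge P/2\ge M/2$. So $v+kP$ lies outside the open interval $(-M/2,M/2)$, and it can only lie on the boundary $\pm M/2$. Since $\hat f$ is continuous and vanishes outside $[-M/2,M/2]$, it also vanishes at the endpoints $\pm M/2$. Hence every summand $\hat f(v+kP)$ in \eqref{eq:decayrate} is zero. Lemma~\ref{Lemma:polydecay} gives $\sqrt L\,\delta(f,L)\le 2c(2^a-1)\zeta(a)L^{1/2-a}$, which is the first estimate.

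\emph{Second case} ($\mathrm{supp}\,f\subseteq[-M/2,M/2]$ with $M\le L$). The same argument shows $\delta(f,L)=0$. For $|x|\le L/2$ and $k\ne0$ we have $|x+kL|\ge L/2\ge M/2$, and $f$ vanishes there, again using continuity at the endpoints. Lemma~\ref{Lemma:polydecay} applied to $\hat f$ with exponent $b$, constant $d$ and step size $P$ gives $\delta(\hat f,P)\le 2d(2^b-1)\zeta(b)P^{-b}$. Multiplying by $L^{-1/2}$ yields the second estimate.

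The only delicate point is the boundary case $|v+kP|=M/2$ (respectively $|x+kL|=M/2$), which occurs when $M=P$ (respectively $M=L$). It is handled by the continuity of $\hat f$ (respectively $f$), which forces the function to vanish at the endpoints of its support interval. Everything else is a direct substitution into earlier results.
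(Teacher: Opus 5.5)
Your proof is correct and follows the paper's argument. The paper also shows that the support condition forces $\delta(\hat f,P)=0$ (respectively $\delta(f,L)=0$) and bounds the remaining decay rate by Lemma~\ref{Lemma:polydecay} within the estimate of Theorem~\ref{Thm:samplingpolynomials}; your check that $f,\hat f$ lie in the amalgam space and your continuity argument for the endpoint case $M=P$ (respectively $M=L$) fill in details the paper leaves implicit.
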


\emph{Proof}. This result follows immediately from Theorem \ref{Thm:polydecay}. If $\mathrm{supp}\,\hat{f} \subseteq
\bigl[-\frac{M}{2},\,\frac{M}{2}\bigr]$ and $M \leq P$, then
$\hat{f}(v + kP) = 0$ for all $k \in \mathbb{Z} \setminus \{0\}$ and
$|v| \leq P/2$, so that $\delta(\hat{f}, P) = 0$.
If $\mathrm{supp}\,f \subseteq \bigl[-\frac{M}{2},\,\frac{M}{2}\bigr]$ and
$M \leq L$, then all samples $f\!\left(\frac{n}{P}\right)$ vanish for
$n \in \mathbb{Z} \setminus [LP]$, so that $\delta(f, L) = 0$. \qedsymbol
\medskip

Now we estimate the decay rate \eqref{eq:decayrate} of a function with exponential decay.

\begin{lemma}
	\label{Lemma:expodecay}
	Let $L \in 2\mathbb{N}$ be given. Let $f \in C(\mathbb{R})$ be a
	function with exponential decay
	$$
	\sup_{x \in \mathbb{R}} |f(x)|\,{\mathrm{e}}^{r\,|x|^{\alpha}}
	\leq c < \infty\,, \quad r,\, \alpha > 0\,.
	$$
	Assume that in the case $0 < \alpha < 1$ it holds $r\,(L/2)^{\alpha} > \frac{1}{\alpha} - 1$.\\
Then the decay rate of $f$ with respect to step size $L$ can be
	estimated by
	\begin{equation}
		\label{eq:deltaexpo}
		\delta(f,L) \leq 2c\,\gamma(r, \alpha,L) \,{\mathrm{e}}^{-r\,(L/2)^{\alpha}}
	\end{equation}
with
$$
\gamma(r,\alpha,L) := \left\{\begin{array}{ll}
1 + \frac{r^{1/\alpha}}{L\,(\alpha r (L/2)^{\alpha} - 1 + \alpha)} & \quad 0< \alpha <1\,, \\
1 + \frac{1}{r L} &\quad \alpha = 1\,,\\
1 + \frac{2\,(r (L/2)^{\alpha} + b_{\alpha})^{1/\alpha} - r^{1/\alpha} L}{4 b_{\alpha}} & \quad \alpha>1\,.
\end{array}\right.
$$
For $\alpha > 1$, it holds $b_{\alpha} := \Gamma\big(\frac{1}{\alpha} +1\big)^{\alpha/(1- \alpha)}$. Further $f$ belongs to $W(C(\R), \ell^1(\Z))$ and has the norm
$$
\| f \|_{W(C(\R),\ell^1(\Z))} \leq 2c + \tfrac{2c}{\alpha}\,r^{-1/\alpha}\, \Gamma\big(\tfrac{1}{\alpha}\big) < \infty\,.
$$
\end{lemma}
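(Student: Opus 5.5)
Following the proof of Lemma~\ref{Lemma:polydecay}, I bound the summands for $|x| \le L/2$ and $k \ne 0$ by $|x+kL| \ge (|k|-\tfrac12)L$. This gives
$$
\delta(f,L) \le 2c \sum_{k=1}^{\infty} {\mathrm{e}}^{-r\,((k-\frac12)L)^{\alpha}}
= 2c\,{\mathrm{e}}^{-r(L/2)^{\alpha}} + 2c \sum_{k=2}^{\infty} g(k),
\qquad g(t) := {\mathrm{e}}^{-r((t-\frac12)L)^{\alpha}} .
$$
The $k=1$ term produces the leading factor $1$ in $\gamma$. Since $g$ is decreasing, the tail is bounded by the integral test:
$$
\sum_{k\ge2} g(k) \le \int_1^\infty g(t)\,{\mathrm{d}}t = \frac{1}{L}\int_{L/2}^\infty {\mathrm{e}}^{-r u^{\alpha}}\,{\mathrm{d}}u .
$$
So everything reduces to a bound for the incomplete gamma type integral $I := \int_{L/2}^\infty {\mathrm{e}}^{-ru^\alpha}\,{\mathrm{d}}u$ in the form ${\mathrm{e}}^{-r(L/2)^\alpha}$ times an explicit factor. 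I treat the three cases $\alpha<1$, $\alpha=1$, $\alpha>1$ separately.

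\emph{Case $\alpha = 1$.} The integral is exact: $I = \frac1r\,{\mathrm{e}}^{-rL/2}$. This gives $\gamma = 1 + \frac{1}{rL}$.

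\emph{Case $0<\alpha<1$.} Substitute $w = r u^{\alpha}$, so that $I = \frac{1}{\alpha r^{1/\alpha}} \int_{w_0}^\infty w^{1/\alpha-1}{\mathrm{e}}^{-w}\,{\mathrm{d}}w$ with $w_0 = r(L/2)^\alpha$. With $s = 1/\alpha > 1$, I use the standard bound for the upper incomplete gamma function,
$$
\Gamma(s,w_0) \le \frac{w_0^{s-1}{\mathrm{e}}^{-w_0}}{1-(s-1)/w_0} \qquad \text{valid for } w_0 > s-1 .
$$
This is obtained by writing $w^{s-1} = w_0^{s-1}(w/w_0)^{s-1}$ and using $(1+t/w_0)^{s-1} \le {\mathrm{e}}^{(s-1)t/w_0}$. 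The condition $w_0 > s-1$ is exactly the hypothesis $r(L/2)^\alpha > \frac1\alpha - 1$. Algebraic simplification of $w_0^{s-1}/(\alpha r^{1/\alpha}(w_0-s+1))\cdot w_0$ should give the stated $\gamma$. I expect some constant juggling to match the precise form $r^{1/\alpha}/\bigl(L(\alpha r(L/2)^\alpha - 1+\alpha)\bigr)$, and I will check whether the paper's constant is a cruder or finer variant of this.

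\emph{Case $\alpha>1$.} This is the main obstacle, since the stated constant involves $b_\alpha = \Gamma(1/\alpha+1)^{\alpha/(1-\alpha)}$. This suggests using a known sharp bound for $\int_x^\infty {\mathrm{e}}^{-t^\alpha}\,{\mathrm{d}}t$ of Alzer/Gautschi type:
$$
\frac{1}{\Gamma(1+\frac1\alpha)}\int_x^\infty {\mathrm{e}}^{-t^\alpha}\,{\mathrm{d}}t < \bigl({\mathrm{e}}^{-x^\alpha}\bigr)\cdot\ldots ,
$$
or, equivalently, the inequality $\int_x^\infty {\mathrm{e}}^{-t^\alpha}\,{\mathrm{d}}t \le \frac{1}{2}\,\bigl((x^\alpha+b_\alpha)^{1/\alpha}-x\bigr)\,{\mathrm{e}}^{-x^\alpha}\cdot(\text{const})$. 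I will first rescale $t = r^{1/\alpha}u$, so that $x = r^{1/\alpha}L/2$ and $x^\alpha = r(L/2)^\alpha$. Then I will invoke (or reprove via monotonicity of the ratio of the two sides) the Alzer-type upper bound with constant $b_\alpha$ and read off $\gamma$. If the cited inequality does not match exactly, the fallback is the elementary bound $\int_x^\infty {\mathrm{e}}^{-t^\alpha}\,{\mathrm{d}}t \le \frac{{\mathrm{e}}^{-x^\alpha}}{\alpha x^{\alpha-1}}$, obtained from $t^{\alpha-1} \ge x^{\alpha-1}$, which yields a valid though different $\gamma$.

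\emph{Amalgam norm.} Finally, $\|f\|_{W} \le \sum_k \max_{[k,k+1]}|f|$. The two cells adjacent to $0$ contribute at most $2c$. For the remaining cells, the maximum on $[k,k+1]$, $k\ge1$, is at most $c\,{\mathrm{e}}^{-rk^\alpha} \le c\int_{k-1}^{k}{\mathrm{e}}^{-rt^\alpha}\,{\mathrm{d}}t$, and symmetrically for negative $k$. Summing gives at most $2c\int_0^\infty {\mathrm{e}}^{-rt^\alpha}\,{\mathrm{d}}t = \frac{2c}{\alpha}\,r^{-1/\alpha}\,\Gamma(1/\alpha)$.
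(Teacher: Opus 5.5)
Your approach is the paper's. You use the same majorant $2c\sum_{k\ge1}\mathrm{e}^{-r((k-\frac12)L)^{\alpha}}$ and the same integral test: your $\frac1L\int_{L/2}^{\infty}\mathrm{e}^{-rv^{\alpha}}\,\mathrm{d}v$ equals the paper's $\frac{1}{2\alpha}u^{-1/\alpha}\,\Gamma(\frac1\alpha,u)$ with $u:=r(L/2)^{\alpha}$. The incomplete gamma bounds are also the same. Your elementary proof of $\Gamma(s,w_0)\le w_0^{s}\mathrm{e}^{-w_0}/(w_0-s+1)$ is exactly the inequality the paper quotes from Pinelis. The $b_\alpha$-bound you want for $\alpha>1$ is Gautschi's inequality $\int_x^\infty\mathrm{e}^{-t^\alpha}\,\mathrm{d}t\le b_\alpha^{-1}\bigl((x^\alpha+b_\alpha)^{1/\alpha}-x\bigr)\mathrm{e}^{-x^\alpha}$. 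The case $\alpha=1$ and the amalgam-norm estimate are correct and agree with the paper. The gap is the step you deferred: for $\alpha\neq1$ the final simplification does \emph{not} produce the printed $\gamma$, and no constant juggling will make it do so.

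\emph{What the argument actually gives.} For $0<\alpha<1$ you get $\gamma=1+\frac{1}{2(\alpha r(L/2)^{\alpha}-1+\alpha)}$. This is the printed fraction times $\frac{L}{2r^{1/\alpha}}$. It tends to $1+\frac{1}{rL}$ as $\alpha\to1$, whereas the printed formula would give $1+\frac{2}{L^2}$ there. For $\alpha>1$, Gautschi's bound with $x=u^{1/\alpha}$ gives $\gamma=1+\frac{(u+b_\alpha)^{1/\alpha}-u^{1/\alpha}}{2b_\alpha u^{1/\alpha}}$. This is the printed fraction times $u^{-1/\alpha}=\frac{2}{r^{1/\alpha}L}$. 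Your fallback gives the valid and simpler $\gamma=1+\frac{1}{2\alpha u}$. The paper's own proof is the same computation. Simplified correctly, it retains the prefactor $\frac{1}{2\alpha}u^{-1/\alpha}$ in front of $\Gamma(\frac1\alpha,u)$, which the printed $\gamma$ mishandles. Consequently the printed bound does not follow when $L>2r^{1/\alpha}$ (for $\alpha<1$) or when $r^{1/\alpha}L<2$ (for $\alpha>1$).

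\emph{The printed bound can fail in those regimes.} Take $f(x)=\mathrm{e}^{-0.11\sqrt{|x|}}$ with $c=1$, $\alpha=\frac12$ and $L=200$. Then $u=1.1>1$, so the hypothesis holds. The printed right-hand side is $2\bigl(1+\frac{0.0121}{10}\bigr)\mathrm{e}^{-1.1}\approx0.667$. But at $x=L/2$ the terms $k=-1,1,-2,2,-3$ alone give $\mathrm{e}^{-1.1}+2\mathrm{e}^{-1.1\sqrt3}+2\mathrm{e}^{-1.1\sqrt5}\approx0.801$. Now take $f(x)=\mathrm{e}^{-x^2/100}$, so $\alpha=2$ and $r=0.01$, with $L=2$. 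The printed bound is $\approx2.78$, while $\delta(f,2)\ge\sum_{k\neq0}\mathrm{e}^{-k^2/25}\approx7.86$.

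So for $\alpha\neq1$ neither your argument nor the paper's establishes the lemma as printed. What both prove is the lemma with the corrected $\gamma$ above, and that is what you should state instead of trying to match the printed form.
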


\emph{Proof}.
By the exponential decay of $f$, for all $k \in \mathbb{Z} \setminus \{0\}$
and $|x| \leq L/2$ it holds that
$$
\bigl|f(x + k L)\bigr| \leq c\,{\mathrm{e}}^{-r\,|x + kL|^{\alpha}}
\leq c\,{\mathrm{e}}^{-r\,(L/2)^{\alpha}\,(2|k| - 1)^{\alpha}}\,.
$$
Hence it follows that
\begin{eqnarray*}
	\delta(f,L) &=& \max_{|x| \leq L/2} \sum_{k \in \mathbb{Z} \setminus \{0\}} \bigl|f(x + k L)\bigr|
	\leq 2c\sum_{k=1}^{\infty}
	{\mathrm{e}}^{-r\,(L/2)^{\alpha}\,(2k-1)^{\alpha}}\\
	&=& 2c \sum_{n=0}^{\infty}
	{\mathrm{e}}^{-r\,(L/2)^{\alpha}\,(2n+1)^{\alpha}}\,.
\end{eqnarray*}
Setting $u := r\,(L/2)^{\alpha}$ for shortness, the integral test for convergence of
series yields
\begin{eqnarray*}
	\sum_{n=0}^{\infty} {\mathrm{e}}^{-u\,(2n+1)^{\alpha}}
	&\leq& {\mathrm{e}}^{-u} + \int_0^{\infty}
	{\mathrm{e}}^{-u\,(2x+1)^{\alpha}}\,{\mathrm{d}}x
	= {\mathrm{e}}^{-u} + \frac{1}{2\alpha}\,u^{-1/\alpha}\,
	\Gamma\!\left(\tfrac{1}{\alpha}, u\right),
\end{eqnarray*}
where
$$
\Gamma\big(\frac{1}{\alpha},u\big) := \int_u^{\infty} t^{1/\alpha - 1}\,{\mathrm e}^{-t}\,{\mathrm d}t
$$
denotes the \emph{upper incomplete gamma function}. A simple check shows that
$$
- \frac{1}{\alpha}\,u^{-1/\alpha}\,\Gamma\big(\tfrac{1}{\alpha}, t^{\alpha}\,u\big)
$$
is a primitive of ${\mathrm e}^{-u\,t^{\alpha}}$ and therefore
$$
\int_1^{\infty} {\mathrm e}^{-u\,t^{\alpha}}\,{\mathrm d}t = \frac{1}{\alpha}\,u^{-1/\alpha}\,\Gamma\big(\tfrac{1}{\alpha}, u\big)\,.
$$
By \cite[Theorem 1.1 and Proposition 2.7]{Pi20}, an upper bound of the upper incomplete gamma function $\Gamma\big(\tfrac{1}{\alpha}, u\big)$ is given by
$$
\Gamma\big(\tfrac{1}{\alpha}, u\big) \leq \left\{ \begin{array}{ll}
\frac{u^{1/\alpha}}{u + 1-1/\alpha}\,{\mathrm e}^{-u} & \quad 0 < \alpha < 1\,,\\
\big(1 + \frac{1}{2u}\big)\,{\mathrm e}^{-u} & \quad \alpha = 1\,,\\
\frac{\alpha\,(u + b_{\alpha})^{1/\alpha} - u^{1/\alpha}}{b_{\alpha}}\,{\mathrm e}^{-u} & \quad \alpha > 1\,.
\end{array}\right.
$$
This implies the estimate \eqref{eq:deltaexpo}.
Obviously, we have $f \in W(C(\R), \ell^1(\Z))$, since it holds by assumption and \cite[Formula 3.478]{GR82} that
\begin{eqnarray*}
\| f \|_{W(C(\R),\ell^1(\Z))} &\leq& c \sum_{n=0}^{\infty} {\mathrm e}^{-r\,n^{\alpha}} + c \sum_{n=1}^{\infty} {\mathrm e}^{-r\,(n-1)^{\alpha}} = 2 c \sum_{n=0}^{\infty} {\mathrm e}^{-r\,n^{\alpha}}\\
&\leq& 2c + 2c \int_0^{\infty}{\mathrm e}^{-r\,t^{\alpha}}\,{\mathrm d}t = 2c + \tfrac{2c}{\alpha}\,r^{-1/\alpha}\, \Gamma\big(\tfrac{1}{\alpha}\big) < \infty\,.
\end{eqnarray*}
This completes the proof. \qedsymbol
\medskip

Using the decay rates of $f$ and $\hat f$ which have both exponential decays, we can estimate the scaled uniform maximum error $M_{P,L}(f)$:

\begin{theorem}
	\label{Thm:expodecay}
	Let $L,\, P \in 2\mathbb{N}$ be given. Let $f,\, \hat{f} \in C(\mathbb{R})$
	be functions with exponential decay
	\begin{eqnarray*}
		\sup_{x \in \mathbb{R}} |f(x)|\,{\mathrm{e}}^{r\,|x|^{\alpha}}
		&\leq& c < \infty\,, \quad r,\,\alpha > 0\,,\\
		\sup_{v \in \mathbb{R}} |\hat{f}(v)|\,{\mathrm{e}}^{s\,|v|^{\beta}}
		&\leq& d < \infty\,, \quad s,\,\beta > 0\,.
	\end{eqnarray*}
	Assume that $r\,(L/2)^{\alpha} > \frac{1}{\alpha} - 1$ in the case $0 < \alpha <1$ and that $s\,(P/2)^{\beta}> \frac{1}{\beta} - 1$ in the case $0 < \beta < 1$.\\
	Then it holds the error estimate
	\begin{equation}
		\label{eq:maxerrorexpodecay}
		M_{P,L}(f) \leq
		2d\,\gamma(s,\beta,P)\, L^{-1/2}\,{\mathrm{e}}^{-s\,(P/2)^{\beta}} + 2c\,\gamma(r,\alpha,L)\,L^{1/2}\,{\mathrm{e}}^{-r\,(L/2)^{\alpha}}\,,
	\end{equation}
where the constants $\gamma(s,\beta,P)$ and $\gamma(r,\alpha,L)$ are defined in Lemma \ref{Lemma:expodecay}.
\end{theorem}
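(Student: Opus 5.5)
The proof follows the same pattern as Theorem~\ref{Thm:polydecay}: we combine the general decomposition of Theorem~\ref{Thm:samplingpolynomials} with the decay-rate estimate of Lemma~\ref{Lemma:expodecay}.

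First we check that Theorem~\ref{Thm:samplingpolynomials} applies, i.e.\ that $f,\,\hat f \in W(C(\mathbb{R}),\ell^1(\mathbb{Z}))$. This is the final statement of Lemma~\ref{Lemma:expodecay}. We apply it once to $f$ with parameters $(c,r,\alpha)$, and once to $\hat f$ with parameters $(d,s,\beta)$ in place of $(c,r,\alpha)$. The Wiener norm bound there uses no condition on $L$, so it holds for all $\alpha,\beta>0$. Hence the Poisson summation formula \eqref{eq:Poisson2} holds, and Theorem~\ref{Thm:samplingpolynomials} gives
$$
M_{P,L}(f) \le L^{-1/2}\,\delta(\hat f,P) + L^{1/2}\,\delta(f,L)\,.
$$

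Next we bound the two decay rates separately with \eqref{eq:deltaexpo}.
\begin{itemize}
\item For $\delta(f,L)$ we use $f$ with step size $L$. The hypothesis $r\,(L/2)^{\alpha} > \frac{1}{\alpha}-1$ in the case $0<\alpha<1$ is exactly the assumption of Lemma~\ref{Lemma:expodecay}. It yields
$$
\delta(f,L)\le 2c\,\gamma(r,\alpha,L)\,{\mathrm e}^{-r(L/2)^{\alpha}}\,.
$$
\item For $\delta(\hat f,P)$ we apply Lemma~\ref{Lemma:expodecay} to $\hat f$ with step size $P\in 2\mathbb{N}$, decay constant $d$ and parameters $s,\beta$. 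The hypothesis $s\,(P/2)^{\beta}>\frac1\beta-1$ in the case $0<\beta<1$ supplies the lemma's assumption. It yields
$$
\delta(\hat f,P)\le 2d\,\gamma(s,\beta,P)\,{\mathrm e}^{-s(P/2)^{\beta}}\,.
$$
\end{itemize}
Substituting both bounds into the inequality above gives \eqref{eq:maxerrorexpodecay}.

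There is no real obstacle, since all the analytic work (the incomplete gamma bounds) is already contained in Lemma~\ref{Lemma:expodecay}. The only point needing care is the variable renaming when the lemma is applied to $\hat f$. In the case $\beta>1$ the constant $b_\beta=\Gamma(\frac1\beta+1)^{\beta/(1-\beta)}$ has to be used inside $\gamma(s,\beta,P)$. We also note that the lemma was stated for an arbitrary continuous function with the given decay, so it applies to $\hat f$ without change.
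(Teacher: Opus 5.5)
Your proposal is correct and follows the paper's proof exactly: Theorem~\ref{Thm:samplingpolynomials} reduces $M_{P,L}(f)$ to $L^{-1/2}\delta(\hat f,P)+L^{1/2}\delta(f,L)$, and Lemma~\ref{Lemma:expodecay}, applied to $f$ with step size $L$ and to $\hat f$ with step size $P$, bounds both decay rates. Your explicit check that $f,\hat f\in W(C(\mathbb{R}),\ell^1(\mathbb{Z}))$ is a detail the paper leaves implicit. Both arguments use the lemma as a black box and so inherit its constants: for $0<\alpha<1$ the lemma's own derivation gives $\gamma(r,\alpha,L)=1+\frac{1}{2(\alpha r(L/2)^{\alpha}-1+\alpha)}$, which differs from the displayed formula by a factor $2r^{1/\alpha}/L$ in the correction term.
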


\emph{Proof}. Using Theorem~\ref{Thm:samplingpolynomials}, it follows that
$$
M_{P,L}(f)
\leq \frac{1}{\sqrt L}\,\delta(\hat f, P) + \sqrt L\, \delta(f,L)\,.
$$
Then Lemma~\ref{Lemma:expodecay} provides that
\begin{eqnarray*}
	\delta(\hat{f}, P)&\leq& 2d\,\gamma(s,\beta,P)\,{\mathrm{e}}^{-s\,(P/2)^{\beta}}\,,\\
	\delta(f, L)&\leq& 2c\,\gamma(r,\alpha,L)\,
	{\mathrm{e}}^{-r\,(L/2)^{\alpha}}\,.
\end{eqnarray*}
This completes the proof. \qedsymbol
\medskip

\begin{remark}
\label{Rem:errorexpodecay}
We compare Theorem \ref{Thm:expodecay} with the corresponding result in \cite[Theorem 2.3]{EhGrKl24}. Then from \cite[Theorem 2.3]{EhGrKl24} it follows immediately that
$$
M_{P,L}^{\ast}(f) \leq E_{P,L}^{\ast}(f) \leq C\,\big( {\mathrm e}^{-r' (L/2)^{\alpha}} + {\mathrm e}^{-s' (P/2)^{\beta}}\big)
$$
with arbitrary $r' \in (0,\,r)$, $s' \in (0,\,s)$, and certain constant $C>0$. Note that in \cite[Theorem 2.3]{EhGrKl24} it is considered only the case $0 < \alpha,\,\beta \leq 1$.
Thus we see that our estimate \eqref{eq:maxerrorexpodecay} is more general, explicit, and more practicable, since all constants in \eqref{eq:maxerrorexpodecay} are known from
the decay conditions of Theorem \ref{Thm:expodecay}.
\end{remark}
\medskip

Using the decay rates of $f$ and $\hat f$ which have mixed decays, we can estimate the scaled uniform maximum error $M_{P,L}(f)$:

\begin{theorem}
	\label{Thm:mixeddecay}
	Let $L,\, P \in 2\mathbb{N}$ be given. Let $f,\, \hat{f} \in C(\mathbb{R})$
	be functions with mixed decay
	\begin{eqnarray*}
		\sup_{x \in \mathbb{R}} |f(x)|\,(1 + |x|)^a
		&\leq& c < \infty\,, \quad a > 1\,,\\
		\sup_{v \in \mathbb{R}} |\hat{f}(v)|\,{\mathrm{e}}^{s\,|v|^{\beta}}
		&\leq& d < \infty\,, \quad s,\,\beta > 0\,.
	\end{eqnarray*}
	Assume that $s\,(P/2)^{\beta} > \frac{1}{\beta} - 1$ in the case $0 < \beta < 1$. Then it holds the error estimate
	$$
	M_{P,L}(f) \leq
	2d\,\gamma(s,\beta,P)\,L^{-1/2}\,
	{\mathrm{e}}^{-s\,(P/2)^{\beta}}
	+ 2c\,(2^a - 1)\,\zeta(a)\,L^{-a + 1/2}\,.
	$$
	Alternatively, let $f,\, \hat{f} \in C(\mathbb{R})$ be functions with
	mixed decay
	\begin{eqnarray*}
		\sup_{x \in \mathbb{R}} |f(x)|\,{\mathrm{e}}^{r\,|x|^{\alpha}}
		&\leq& c < \infty\,, \quad r,\,\alpha > 0\,,\\
		\sup_{v \in \mathbb{R}} |\hat{f}(v)|\,(1 + |v|)^b
		&\leq& d < \infty\,, \quad b > 1\,.
	\end{eqnarray*}
	Assume that $r\,(L/2)^{\alpha} > \frac{1}{\alpha} - 1$ in the case $0 < \alpha < 1$. Then it holds the error estimate
	\begin{equation}
		\label{eq:maxerrormixeddecay}
		M_{P,L}(f) \leq
		2d\,(2^b - 1)\,\zeta(b)\,L^{-1/2}\,P^{-b}
		+ 2c\,\gamma(r,\alpha,L)\,L^{1/2}\,
		{\mathrm{e}}^{-r\,(L/2)^{\alpha}}\,.
	\end{equation}
\end{theorem}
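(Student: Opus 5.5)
The plan follows the pattern of Theorems~\ref{Thm:polydecay} and \ref{Thm:expodecay}. Everything rests on Theorem~\ref{Thm:samplingpolynomials}, which gives
$$
M_{P,L}(f) \leq \frac{1}{\sqrt L}\,\delta(\hat f, P) + \sqrt L\,\delta(f,L)\,.
$$
This holds once $f,\hat f \in W(C(\mathbb{R}),\ell^1(\mathbb{Z}))$. The two mixed cases differ only in which decay lemma is applied to which of the two decay rates.

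The first step is to check the hypotheses of Theorem~\ref{Thm:samplingpolynomials}. For a function with polynomial decay, Lemma~\ref{Lemma:polydecay} gives membership in $W(C(\mathbb{R}),\ell^1(\mathbb{Z}))$. For a function with exponential decay, Lemma~\ref{Lemma:expodecay} gives the same; its norm bound $2c + \tfrac{2c}{\alpha} r^{-1/\alpha}\Gamma(1/\alpha)$ is finite and needs no largeness condition on $L$ or $P$. So in both cases $f$ and $\hat f$ lie in the Wiener amalgam space, and the inequality above holds.

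For the first case, I will apply Lemma~\ref{Lemma:expodecay} to $\hat f$, with $(c,r,\alpha,L)$ replaced by $(d,s,\beta,P)$. The assumption $s(P/2)^\beta > \frac1\beta - 1$ for $0<\beta<1$ is exactly the lemma's hypothesis, so
$$
\delta(\hat f,P)\le 2d\,\gamma(s,\beta,P)\,{\mathrm e}^{-s(P/2)^\beta}\,.
$$
Lemma~\ref{Lemma:polydecay} applied to $f$ gives $\delta(f,L)\le 2c(2^a-1)\zeta(a)L^{-a}$. Multiplying the first bound by $L^{-1/2}$ and the second by $L^{1/2}$ yields the claimed estimate.

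The second case is symmetric. Lemma~\ref{Lemma:polydecay} applied to $\hat f$, with $(c,a,L)$ replaced by $(d,b,P)$, gives $\delta(\hat f,P)\le 2d(2^b-1)\zeta(b)P^{-b}$. Lemma~\ref{Lemma:expodecay} applied to $f$, under the stated condition on $r(L/2)^\alpha$, gives $\delta(f,L)\le 2c\,\gamma(r,\alpha,L)\,{\mathrm e}^{-r(L/2)^\alpha}$. Inserting both into the inequality above gives \eqref{eq:maxerrormixeddecay}.

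I expect no real obstacle here. The only point needing care is that the lemmas are stated for a generic function and step size $L\in 2\mathbb{N}$, so they must be applied with the roles of the variables relabeled (to $\hat f$ with step size $P\in 2\mathbb{N}$). The largeness hypotheses must be carried over correctly, and only in the sub-case where the relevant exponent is less than $1$.
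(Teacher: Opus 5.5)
Your proposal is correct and matches the paper's proof. Both insert the bounds of Lemma~\ref{Lemma:polydecay} and Lemma~\ref{Lemma:expodecay}, with the roles of $(f,L)$ and $(\hat f,P)$ relabeled as needed, into the estimate of Theorem~\ref{Thm:samplingpolynomials}; your explicit check that $f,\hat f \in W(C(\mathbb{R}),\ell^1(\mathbb{Z}))$, via the norm bounds in those lemmas, is a step the paper leaves implicit.
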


\emph{Proof}. In both cases, Theorem~\ref{Thm:samplingpolynomials} yields
$$
M_{P,L}(f)
\leq \frac{1}{\sqrt L}\,\delta(\hat{f}, P) + \sqrt L\, \delta(f, L)\,.
$$
Then the assertion follows immediately from Lemma~\ref{Lemma:polydecay} and
Lemma~\ref{Lemma:expodecay}. \qedsymbol
\medskip

In practice, $L,\, P \in 2\mathbb{N}$ are sufficiently large, so that the
exponential error terms in the estimates of Theorem~\ref{Thm:mixeddecay} are
negligible. Thus $M_{P,L}(f)$ is dominated by the corresponding polynomial
error term.
\medskip

\begin{remark}
\label{Rem:errormixeddecay}
We compare the second case of Theorem \ref{Thm:mixeddecay} with the corresponding result in \cite[Theorem 2.4]{EhGrKl24}. Then from \cite[Theorem 2.4 and (4.21)]{EhGrKl24} it follows immediately that
$$
M_{P,L}^{\ast}(f) \leq E_{P,L}^{\ast}(f) \leq C\,\big( {\mathrm e}^{-r' (L/2)^{\alpha}} + P^{-\beta}\big)
$$
with arbitrary $r' \in (0,\,r)$, $\beta \in \big(0,\,b - \frac{1}{2}\big)$, and certain constant $C>0$. Note that in \cite[Theorem 2.4]{EhGrKl24} it is considered only the case $0 < \alpha\leq 1$.
Thus we see that our estimate \eqref{eq:maxerrormixeddecay} is more general, explicit, and more practicable, since all constants in \eqref{eq:maxerrormixeddecay} are known from
the decay conditions of Theorem \ref{Thm:mixeddecay}.
\end{remark}
\medskip

\new{
\subsection{A lower bound for the uniform approximation error}

All estimates obtained so far are upper bounds. We now show that the rate in
the truncation parameter $L$ predicted by Theorem~\ref{Thm:polydecay} is
sharp. This is in
particular relevant for Remark~\ref{Rem:symmetry}, where we claimed that the
asymmetry of \eqref{eq:maxerrorpolydecay} is a genuine effect.

\begin{theorem}
\label{Thm:lowerbound}
Let $L$, $P \in 2\mathbb{N}$ be given, and let
$f \in W(C(\mathbb{R}),\ell^1(\mathbb{Z}))$ with
$\hat f \in W(C(\mathbb{R}),\ell^1(\mathbb{Z}))$. Assume that $f$ is
nonnegative, that $f$ is non-increasing on $[x_0,\,\infty)$ for some
$x_0 \ge 0$, and that
$$
f(x) \ \ge\ c_0\,(1 + x)^{-a}\,, \qquad x \ge x_0\,,
$$
with constants $c_0 > 0$ and $a > 1$. Then for all $L \ge 2\,x_0$ it holds
\begin{equation}
\label{eq:lowerbound}
M_{P,L}(f)\ \ge\ M_{P,L}^{\ast}(f) \ \ge\ \frac{1}{\sqrt L}
\left(\frac{c_0}{a-1}\left(1 + \frac{L}{2} + \frac{1}{P}\right)^{1-a}
- \delta(\hat f, P)\right).
\end{equation}
The bound \eqref{eq:lowerbound} is informative in the truncation dominated
regime $\delta(\hat f,P) < \frac{c_0}{a-1}(1 + \frac{L}{2} + \frac{1}{P})^{1-a}$.
In particular, if $\hat f$ is bandlimited with
$\mathrm{supp}\,\hat f \subseteq [-\frac{M}{2},\,\frac{M}{2}]$ and $M \le P$,
then $\delta(\hat f,P) = 0$ and
$$
M_{P,L}(f) \ \ge\ \frac{c_0}{a-1}\,\Bigl(1 + \frac{L}{2} + \frac{1}{P}\Bigr)^{1-a}\,
L^{-1/2}\ =\ \Omega\bigl(L^{-a+1/2}\bigr) \qquad (L \to \infty)\,,
$$
uniformly in $P \ge M$. Together with the upper bound
$\mathcal{O}(L^{-a+1/2})$ of Theorem~\ref{Thm:polydecay} and
Corollary~\ref{Cor:bandlimited}, this yields the two-sided estimate
$M_{P,L}(f) = \Theta\bigl(L^{-a+1/2}\bigr)$.
\end{theorem}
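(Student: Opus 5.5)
We evaluate the error at the single grid point $v = 0$, that is $k = 0$, where all tail samples add constructively. The first inequality $M_{P,L}(f) \ge M_{P,L}^{\ast}(f)$ is trivial, and $M_{P,L}^{\ast}(f) \ge L^{-1/2}\,|\hat f(0) - s_{P,L}(0)|$. From the proof of Theorem~\ref{Thm:samplingpolynomials} we have the identity $\hat f(v) - s_{P,L}(v) = \sigma_2(v) - \sigma_1(v)$. The reverse triangle inequality then gives
$$
|\hat f(0) - s_{P,L}(0)| \ \ge\ |\sigma_2(0)| - |\sigma_1(0)| \ \ge\ \sigma_2(0) - \delta(\hat f,P)\,,
$$
since $|\sigma_1(0)| \le \max_{|v|\le P/2}|\sigma_1(v)| \le \delta(\hat f,P)$. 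It therefore suffices to prove
$$
\sigma_2(0) = \frac1P\sum_{n\in\mathbb Z\setminus[LP]} f\!\left(\frac nP\right) \ \ge\ \frac{c_0}{a-1}\left(1+\frac L2+\frac1P\right)^{1-a}.
$$

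Because $f\ge 0$, all terms of $\sigma_2(0)$ are nonnegative. We drop the negative indices $n \le -LP/2$ and keep only $n \ge LP/2+1$, which is exactly the set of positive indices outside $[LP]$. For these $n$ we have $n/P > L/2 \ge x_0$, so $f$ is non-increasing on the relevant range. This gives the integral comparison $\frac1P f(n/P) \ge \int_{n/P}^{(n+1)/P} f(x)\,\mathrm dx$. Summing over $n \ge LP/2+1$ yields
$$
\sigma_2(0) \ \ge\ \int_{L/2+1/P}^{\infty} f(x)\,\mathrm dx \ \ge\ c_0\int_{L/2+1/P}^{\infty}(1+x)^{-a}\,\mathrm dx = \frac{c_0}{a-1}\left(1+\frac L2+\frac1P\right)^{1-a}.
$$
The second inequality uses the lower bound on $f$, valid since $L/2+1/P \ge x_0$. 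Dividing by $\sqrt L$ gives \eqref{eq:lowerbound}.

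For the bandlimited case, Corollary~\ref{Cor:bandlimited} gives $\delta(\hat f,P)=0$ when $M \le P$. Since $P \ge 2$, we have $(1+L/2+1/P)^{1-a} \ge (L/2+2)^{1-a} \ge C\,L^{1-a}$ for $L \ge 2$, with $C$ independent of $P$. Hence the bound is $\Omega(L^{1/2-a})$ uniformly in $P$. To obtain the $\Theta$ statement we combine this with Corollary~\ref{Cor:bandlimited}. This requires an upper polynomial decay of $f$ with the same exponent $a$, which we take as an implicit assumption for that final sentence.

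The main point needing care is the monotone Riemann-sum comparison. We must check that the left endpoint of each interval lies in $[x_0,\infty)$, which follows from $L \ge 2x_0$. We must also check the index bookkeeping: by the convention on $[LP]$, the first excluded positive index is $LP/2+1$, and this produces the $1/P$ shift in the bound.
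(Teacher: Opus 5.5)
Your proof is correct and follows the paper's argument essentially step for step. Both evaluate the error at the grid point $v=0$, use the splitting $\hat f - s_{P,L} = \sigma_2 - \sigma_1$ with $|\sigma_1(0)| \le \delta(\hat f,P)$, discard the indices $n \le -LP/2$, and compare $\frac1P\sum_{n\ge LP/2+1} f(n/P)$ monotonically with $\int_{L/2+1/P}^\infty f(x)\,\mathrm{d}x$. Your extra remarks are also accurate and go slightly beyond what the paper's proof writes out: the $P$-uniform constant in the $\Omega$ bound, and the observation that the final $\Theta$ claim tacitly needs the matching upper decay $|f(x)|\le c\,(1+|x|)^{-a}$, which the paper itself concedes in Remark~\ref{Rem:symmetry}.
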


\emph{Proof}. As in the proof of Theorem~\ref{Thm:samplingpolynomials} we have
$\hat f(v) - s_{P,L}(v) = \sigma_2(v) - \sigma_1(v)$ for all $v \in \mathbb{R}$.
Evaluating at the grid point $v = 0$, which belongs to
$\{k/L:\, k \in [LP]\}$, we obtain
$$
\sqrt{L}\; M_{P,L}^{\ast}(f) \ \ge\ \bigl|\hat f(0) - s_{P,L}(0)\bigr|
= \bigl|\sigma_2(0) - \sigma_1(0)\bigr|
\ \ge\ |\sigma_2(0)| - |\sigma_1(0)|\,.
$$
By the proof of Theorem~\ref{Thm:samplingpolynomials} we have
$|\sigma_1(0)| \le \delta(\hat f,P)$. Since $f \ge 0$, the value
$$
\sigma_2(0) = \frac{1}{P} \sum_{n \in \mathbb{Z}\setminus[LP]}
f\!\left(\frac{n}{P}\right)
$$
is nonnegative, so that $|\sigma_2(0)| = \sigma_2(0)$, and discarding the
indices $n \le -\frac{LP}{2}$ only decreases the sum,
$$
\sigma_2(0) \ \ge\ \frac{1}{P}\sum_{n > LP/2} f\!\left(\frac{n}{P}\right)
= \frac{1}{P}\sum_{n \ge LP/2 + 1} f\!\left(\frac{n}{P}\right).
$$
Since $L \ge 2 x_0$, the function $f$ is non-increasing on
$[\frac{L}{2},\,\infty)$, and the right-hand side is a lower Riemann sum with
step size $\frac 1P$ of the integral of $f$ over
$\bigl[\frac{L}{2} + \frac{1}{P},\,\infty\bigr)$. Hence
$$
\sigma_2(0) \ \ge\ \int_{L/2 + 1/P}^{\infty} f(x)\,{\mathrm d}x
\ \ge\ c_0 \int_{L/2 + 1/P}^{\infty} (1+x)^{-a}\,{\mathrm d}x
= \frac{c_0}{a-1}\left(1 + \frac{L}{2} + \frac{1}{P}\right)^{1-a}.
$$
Together with $M_{P,L}^{\ast}(f) \le M_{P,L}(f)$ this gives
\eqref{eq:lowerbound}. \qedsymbol
\medskip

\begin{remark}
Theorem~\ref{Thm:lowerbound} shows that the exponent $a - \frac12$ of $L$ in
\eqref{eq:maxerrorpolydecay} is optimal, and that the maximum of
$|\hat f - s_{P,L}|$ is, in the truncation dominated regime, attained near
$v=0$ and is of the order of the $L^1$-tail
$\int_{|x| > L/2} |f(x)|\,{\mathrm d}x$ of $f$. The assumptions of
Theorem~\ref{Thm:lowerbound} are satisfied, e.g., by the bandlimited function
$g$ of Example~\ref{Ex:Ldependence} and, more generally, by every nonnegative
even function with a monotone tail of exact polynomial order. They are not
satisfied by oscillating functions, for which cancellation in $\sigma_2(0)$ may
occur; in that case the upper bound of Theorem~\ref{Thm:polydecay} need not be
attained.
\end{remark}
\medskip

\subsection{The resulting algorithm\label{sec:algorithm}}

We now summarize the method. The input consists of the function $f$, the decay
data of $f$ and $\hat f$, a target accuracy $\varepsilon > 0$ and the nodes at
which $\hat f$ is to be evaluated. The output consists of approximate values of
$\hat f$ together with a rigorous a~priori error bound.

\begin{algorithm}{Computation of the Fourier transform via \NFFT}
\label{Alg:FTviaNFFT}
\textbf{Input:} $f \in W(C(\mathbb{R}),\ell^1(\mathbb{Z}))$ with
$\hat f \in W(C(\mathbb{R}),\ell^1(\mathbb{Z}))$, given as a routine which
evaluates $f$; decay parameters of $f$ and $\hat f$ as in
Theorem~\ref{Thm:polydecay}, Theorem~\ref{Thm:expodecay} or
Theorem~\ref{Thm:mixeddecay}; target accuracy $\varepsilon > 0$; evaluation
nodes $v_1,\ldots,v_K \in \mathbb{R}$.

\begin{enumerate}[label=\arabic*., leftmargin=*]
\item \emph{Choice of the bandwidth $P$.} Choose $P \in 2\mathbb{N}$ with
$P \ge 2\max_j |v_j|$, so that all evaluation nodes lie in $[-P/2,\,P/2]$, and
so large that the aliasing term satisfies
$\delta(\hat f, P) \le \frac{\varepsilon}{2}$. For polynomial decay of
$\hat f$ this means $2d\,(2^b-1)\,\zeta(b)\,P^{-b} \le \frac{\varepsilon}{2}$
by Lemma~\ref{Lemma:polydecay}; for exponential decay use
Lemma~\ref{Lemma:expodecay}.
\item \emph{Choice of the truncation parameter $L$.} Choose
$L \in 2\mathbb{N}$ so large that the truncation term satisfies
$L\,\delta(f,L) \le \frac{\varepsilon}{2}$, again by
Lemma~\ref{Lemma:polydecay} or Lemma~\ref{Lemma:expodecay}. By
Theorem~\ref{Thm:samplingpolynomials} the resulting sampling polynomial then
satisfies $\max_{|v| \le P/2}|\hat f(v) - s_{P,L}(v)| \le \varepsilon$.
\item \emph{Sampling.} Evaluate $f$ at the $LP$ equispaced nodes $n/P$,
$n \in [LP]$, and form the coefficients
$c_n := \frac{1}{P}\,f\!\left(\frac{n}{P}\right)$.
\item \emph{Choice of the \NFFT{} parameters.} Choose the oversampling factor
$\sigma \ge 1$ (usually $\sigma = 2$) and the window cut-off parameter
$m \in \mathbb{N}$ so that the \NFFT{} error is below $\varepsilon$; by
\cite{PoTa21a,PoTa21b} the \NFFT{} error decays exponentially in $m$, and in
practice $m \le 8$ suffices, see Table~\ref{tab:cutoff}.
\item \emph{Evaluation.} Compute
$s_{P,L}(v_j) = \sum_{n \in [LP]} c_n\,
{\mathrm e}^{-2\pi{\mathrm i}\,n (v_j/P)}$, $j = 1,\ldots,K$,
by one \NFFT{} of size $LP$ with the nodes $x_j := v_j/P \in [-\frac12,\,\frac12)$.
If the $v_j$ are the equispaced points $k/L$, $k \in [LP]$, replace the \NFFT{} by
a single \FFT{} of length $LP$.
\end{enumerate}

\textbf{Output:} approximate values $s_{P,L}(v_j) \approx \hat f(v_j)$,
$j = 1,\ldots,K$, with the guarantee
$\max_j |\hat f(v_j) - s_{P,L}(v_j)| \le \sqrt{L}\,M_{P,L}(f) \le \varepsilon$
up to the \NFFT{} error of step~4.

\textbf{Cost:} $\mathcal{O}(LP)$ evaluations of $f$ in step~3 and
$\mathcal{O}\bigl(\sigma L P \log(\sigma L P) + m\,K\bigr)$ arithmetic
operations in step~5.
\end{algorithm}
\medskip

\begin{remark}
\label{Rem:totalerror}
The total error of Algorithm~\ref{Alg:FTviaNFFT} splits into the approximation
error analysed in this paper and the error of the \NFFT{} itself,
$$
\bigl|\hat f(v_j) - \tilde s_{P,L}(v_j)\bigr|
\ \le\ \underbrace{\max_{|v| \le P/2}\bigl|\hat f(v) - s_{P,L}(v)\bigr|}
_{\le\ \sqrt L\,M_{P,L}(f),\ \text{Theorem~\ref{Thm:samplingpolynomials}}}
+\ \underbrace{\bigl|s_{P,L}(v_j) - \tilde s_{P,L}(v_j)\bigr|}
_{\text{\NFFT{} error, see \cite{PoTa21a,PoTa21b}}}\,,
$$
where $\tilde s_{P,L}$ denotes the value actually computed by the \NFFT{}. It is
essential here that the first term is bounded \emph{uniformly} in $v$, because
the nodes $v_j$ are arbitrary; a bound on the grid $\{k/L\}$ only, as provided
by \cite{EhGrKl24}, would not suffice. Since the \NFFT{} error decays
exponentially in the cut-off parameter $m$, the second term can always be made
negligible against the first at a cost which is linear in $m$; see
Table~\ref{tab:cutoff}.
\end{remark}
}

\section{Numerical examples\label{sec:numerics}}

In the previous section, we derived explicit error estimates for the uniform
approximation error $M_{P,L}(f)$ under polynomial, exponential, and mixed decay
conditions on $f$ and $\hat{f}$. We now illustrate these theoretical bounds by
means of concrete examples. For each example, we specify the relevant decay
parameters and state the resulting error estimate from
Section~\ref{sec:samplingpolynomials} explicitly before comparing it
numerically with the true approximation error $M_{P,L}(f)$.

\begin{example}
	\label{Ex:M2}
	Let $M_2$ be the centered linear B-spline given by
	$$
	M_2(x) := \left\{\begin{array}{ll}
		1 - |x| & x \in [-1,\,1]\,,\\
		0 & x \in \mathbb{R} \setminus [-1,\,1]\,.
	\end{array}\right.
	$$
	Then $f = M_2$ has the Fourier transform
	$$
	\hat{f}(v) = \int_{-1}^1 (1 - |x|)\,\cos(2\pi v x)\,{\mathrm{d}}x
	= \bigl(\mathrm{sinc}(\pi v)\bigr)^2\,, \quad v \in \mathbb{R}\,.
	$$
	Since $\mathrm{supp}\,f = [-1,\,1]$, the function $f \in C(\mathbb{R})$ is
	space-limited with $M = 2$, and $\hat{f} \in C(\mathbb{R})$ has polynomial
	decay with $b = 2$. If we estimate the even function ${\hat f}(v)\,(1 + |v|)^2$ separately on the intervals $[0,\,\tfrac{1}{2}]$ and $[\tfrac{1}{2},\,\infty)$, we see that $d =\tfrac{9}{4}$ is a possible choice. By
	Corollary~\ref{Cor:bandlimited}, for any $L \geq 2$ it holds
	$$
	M_{P,L}(f) \leq 2d\,(2^2 - 1)\,\zeta(2)\,L^{-1/2}\,P^{-2}
	= \frac{\pi^2}{\sqrt{L}}\,d\,P^{-2}\,.
	$$
	Hence the error decays like $P^{-2}$ as $P \to \infty$, which is confirmed
	by the numerical results in Figure \ref{fig:M2} (left).
	More generally, for the B-spline $M_{2m}$ of even order $2m$, the Fourier
	transform $\widehat{M_{2m}}$ has polynomial decay with $b = 2m$, and
	Corollary~\ref{Cor:bandlimited} yields the estimate
	$$
	M_{P,L}(f) \leq 2d\,(2^{2m} - 1)\,\zeta(2m)\,L^{-1/2}\,P^{-2m}\,.
	$$
	This $P^{-2m}$ decay is illustrated in Figure \ref{fig:M4} (right, $m=2$) and
	Figure \ref{fig:M6} ($m=3$).
	
	Alternatively, consider the bandlimited function
	$$
	g(x) := \frac{M}{2}\,\left(\mathrm{sinc}\!\left(\frac{M}{2}\,\pi x\right)\right)^2,
	\quad x \in \mathbb{R}\,,
	$$
	whose Fourier transform is $\hat{g}(v) = M_2\!\left(\frac{2}{M}\,v\right)$
	with $\mathrm{supp}\,\hat{g} \subseteq \bigl[-\frac{M}{2},\,\frac{M}{2}\bigr]$.
	Since $g$ has polynomial decay with $a = 2$, Corollary~\ref{Cor:bandlimited}
	gives, for $P \geq M$,
	$$
	M_{P,L}(g) \leq 2c\,(2^2 - 1)\,\zeta(2)\,L^{1/2-2}
	= \new{\pi^2\,c\,L^{-3/2}}\,.
	$$
	Hence the error decays like $L^{-3/2}$ as $L \to \infty$\new{, which is
	confirmed numerically in Example~\ref{Ex:Ldependence} and
	Figure~\ref{fig:Ldep}}.
\end{example}

\begin{figure}[ht]
	\centering
	\begin{subfigure}[t]{0.48\textwidth}
		\centering
		\begin{tikzpicture}
		\begin{loglogaxis}[FTerrstyle, log basis x=2,
			xlabel={$P$}, ylabel={$M_{P,L}(f)$}, legend pos=north east]
		\addplot[FTest] coordinates {
			(2,3.925611e+00) (4,9.814028e-01) (8,2.453507e-01) (16,6.133767e-02)
			(32,1.533442e-02) (64,3.833605e-03) (128,9.584012e-04) (256,2.396003e-04)
			(512,5.990007e-05) (1024,1.497502e-05)};
		\addplot[FTsim] coordinates {
			(2,6.748202e-02) (4,2.002465e-02) (8,5.647815e-03) (16,1.515243e-03)
			(32,3.939908e-04) (64,1.005330e-04) (128,2.539690e-05) (256,6.382778e-06)
			(512,1.599926e-06) (1024,4.005130e-07)};
		\legend{estimate, $M_{P,L}(f)$}
		\end{loglogaxis}
		\end{tikzpicture}
		\caption{B-Spline $M_2$, order 2 ($L=2$)}
		\label{fig:M2}
	\end{subfigure}
	\hfill
	\begin{subfigure}[t]{0.48\textwidth}
		\centering
		\begin{tikzpicture}
		\begin{loglogaxis}[FTerrstyle, log basis x=2,
			xlabel={$P$}, ylabel={$M_{P,L}(f)$}, legend pos=north east]
		\addplot[FTest] coordinates {
			(2,1.630149e+00) (4,1.018843e-01) (8,6.367771e-03) (16,3.979857e-04)
			(32,2.487411e-05) (64,1.554632e-06) (128,9.716448e-08) (256,6.072780e-09)
			(512,3.795487e-10)};
		\addplot[FTsim] coordinates {
			(2,1.280503e-03) (4,1.454116e-04) (8,1.323428e-05) (16,1.022636e-06)
			(32,7.159879e-08) (64,4.744578e-09) (128,3.055551e-10) (256,1.938735e-11)
			(512,1.220921e-12)};
		\legend{estimate, $M_{P,L}(f)$}
		\end{loglogaxis}
		\end{tikzpicture}
		\caption{B-Spline $M_4$, order 4 ($L=4$)}
		\label{fig:M4}
	\end{subfigure}
	\captionsetup{font=small}
	\caption{Estimate \eqref{eq:maxerrorpolydecay} (Corollary \ref{Cor:bandlimited}) and
		simulated error $M_{P,L}(f)$ for Example \ref{Ex:M2}: the B-spline $M_2$ of order 2
		with $L=2$ (left, decay $P^{-2}$) and $M_4$ of order 4 with $L=4$ (right, decay $P^{-4}$).}
\end{figure}

\begin{example}
	\label{Ex:Gaussian}
	The Gaussian function
	$$
	f(x) = {\mathrm{e}}^{-\pi x^2}\,, \quad x \in \mathbb{R}\,,
	$$
	has exponential decay with $r = \pi$ and $\alpha = 2$, and its Fourier
	transform $\hat{f}(v) = {\mathrm{e}}^{-\pi v^2}$ has the same exponential
	decay with $s = \pi$ and $\beta = 2$. Provided $\pi\,(L/2)^2 \geq 1$ and
	$\pi\,(P/2)^2 \geq 1$, Theorem~\ref{Thm:expodecay} yields
	\begin{align}
	M_{P,L}(f) &\leq \frac{1}{\sqrt{L}}\left(
	2\,\left(1 + \frac{1}{8}\right){\mathrm{e}}^{-\pi\,(P/2)^2}
	+ 2\,\left(1 + \frac{1}{8}\right)L\,{\mathrm{e}}^{-\pi\,(L/2)^2}\right)\\
	&= \frac{9}{4\sqrt{L}}\left({\mathrm{e}}^{-\pi P^2/4}
	+ L\,{\mathrm{e}}^{-\pi L^2/4}\right).
	\end{align}
	For fixed $L=P$ we obtain
	\[
	M_{L,L}(f)\le \frac{9(L+1)}{4\sqrt{L}} \, \mathrm{e}^{-\pi L^2/4}\,.
	\]	
	In Figure \ref{fig:Gaussian} we plot the error $M_{P,P}$ as well as the analytical estimate for $P=1,\ldots 8$.
\end{example}

% ===========================================================================
% Zweite Figure: M6 und Gaussian (rechte Seite mit deinen neuen Daten)
% ===========================================================================
\begin{figure}[ht]
	\centering
	\begin{subfigure}[t]{0.48\textwidth}
		\centering
		\begin{tikzpicture}
		\begin{loglogaxis}[FTerrstyle, log basis x=2,
			xlabel={$P$}, ylabel={$M_{P,L}(f)$}, legend pos=north east]
		\addplot[FTest] coordinates {
			(2,1.665065e+00) (4,2.601664e-02) (8,4.065099e-04) (16,6.351718e-06)
			(32,9.924559e-08) (64,1.550712e-09)};
		\addplot[FTsim] coordinates {
			(2,4.448083e-05) (4,1.847696e-06) (8,5.221655e-08) (16,1.135319e-09)
			(32,2.113998e-11) (64,3.615440e-13)};
		\legend{estimate, $M_{P,L}(f)$}
		\end{loglogaxis}
		\end{tikzpicture}
		\caption{B-Spline $M_6$, order 6 ($L=6$)}
		\label{fig:M6}
	\end{subfigure}
	\hfill
	\begin{subfigure}[t]{0.48\textwidth}
		\centering
		\begin{tikzpicture}
		\begin{semilogyaxis}[FTerrstyle,
			xlabel={$L=P$}, ylabel={$M_{P,P}(f)$}, legend pos=north east,
			xtick={1,2,3,4,5,6}, xmin=0.7, xmax=6.3]
		\addplot[FTest] coordinates {
			(1,2.051722e+00) (2,2.062588e-01) (3,4.424203e-03) (4,1.961630e-05)
			(5,1.792654e-08) (6,3.379230e-12)};
		\addplot[FTsim] coordinates {
			(1,5.440619e-01) (2,1.643293e-02) (3,5.526831e-04) (4,1.337982e-06)
			(5,1.431390e-09) (6,1.815531e-13)};
		\legend{estimate, $M_{P,P}(f)$}
		\end{semilogyaxis}
		\end{tikzpicture}
		\caption{Gaussian $e^{-\pi x^2}$ ($L=P$)}
		\label{fig:Gaussian}
	\end{subfigure}
	\captionsetup{font=small}
	\caption{Estimate and simulated error $M_{P,L}(f)$: the B-spline $M_6$ of order 6
		with $L=6$ for Example \ref{Ex:M2} (left, estimate \eqref{eq:maxerrorpolydecay},
		decay $P^{-6}$) and the Gaussian $\mathrm{e}^{-\pi x^2}$ with $L=P$ for
		Example \ref{Ex:Gaussian} (right, estimate $\tfrac{9(L+1)}{4\sqrt L}\,\mathrm{e}^{-\pi L^2/4}$).}
\end{figure}

\begin{example}
	\label{ex:mixeddecay}
	\new{This example is taken from \cite[Section~3.2]{EhGrKl24}, where it is
	used to illustrate the mixed-decay case for the scaled Euclidean error;
	we use it here in order to make the two sets of results directly
	comparable, and additionally in Section~\ref{sec:cost}.}
	Let
	$$
	f(x) := {\mathrm{e}}^{-2\pi\,|x - \delta|}\,, \quad x \in \mathbb{R}\,,
	$$
	with shift parameter $\delta \in \mathbb{R}$. Its Fourier transform is
	$$
	\hat{f}(v) = \frac{{\mathrm{e}}^{-2\pi{\mathrm{i}}\,\delta v}}{\pi\,(1 + v^2)}\,,
	\quad v \in \mathbb{R}\,.
	$$
	Hence $f$ has exponential decay with $r = 2\pi$ and $\alpha = 1$, and
	$\hat{f}$ has polynomial decay with $b = 2$ and constant
	$d = 1/\pi$. This is the mixed-decay case of
	Theorem~\ref{Thm:mixeddecay}. Provided $2\pi\,(L/2) \geq 1$, i.e.,
	$L \geq 1/\pi$, we obtain
	$$
	M_{P,L}(f) \leq \frac{1}{\sqrt{L}}\left(
	\frac{2}{\pi}\,(2^2 - 1)\,\zeta(2)\,P^{-2}
	+ 2\,\left(1 + \frac{1}{2}\right)L\,{\mathrm{e}}^{-2\pi\,(L/2)}\right)
	= \frac{1}{\sqrt{L}}\left(\frac{2\pi}{P^2}
	+ 3L\,{\mathrm{e}}^{-\pi L}\right).
	$$
	For sufficiently large $L$, the exponential term is negligible, and the
	error is dominated by the polynomial term $\frac{2\pi}{\sqrt{L}\,P^2}$.
	A good parameter choice is $L = 8$ and $P = 2^t$, $t \geq 6$; for these
	values the exponential contribution is below machine precision and the
	estimate reduces to $M_{P,L}(f) \approx \frac{2\pi}{\sqrt{8}\,P^2}$,
	confirming the observed $P^{-2}$ decay in Figure \ref{fig:mixed_decay}.
\end{example}

\begin{example}\label{Ex:alg2.5}
	For fixed parameter $\beta \in {\mathbb N}$ we consider the  function
	$$
	f(x) := \left\{ \begin{array}{ll}
	(1-x^2)^{\beta-1/2} & \quad x \in [- 1,\,1]\,, \\
	0 & \quad x \in \mathbb R \setminus \big[-1,\,1]\,.
	\end{array}\right.
	$$
	Using  \cite[p.~8]{Oberh90}, we determine the corresponding Fourier transform
	\begin{eqnarray}
	\label{eq:FTvarphi0alg}
	\hat f(v)
	&=& \int_{-1}^1 (1-x^2)^{\beta-1/2}\,{\mathrm e}^{-2\pi {\mathrm i}\, v x}\, {\mathrm d}x = 2\,\int_0^1 (1-x^2)^{\beta-1/2}\,\cos(2\pi\, v x)\, {\mathrm d}x \nonumber \\ [1ex]
	&=& \frac{\pi \,(2\beta)!}{4^{\beta}\,\beta!} \left\{\begin{array}{ll}
	(\pi v)^{-\beta}\,J_{\beta}(2\pi  v)& \quad v \in {\mathbb R}\setminus \{0\}\,,\\
	\frac{1}{\beta!} & \quad v=0\,.
	\end{array} \right.
	\end{eqnarray}
	and obtain the estimate  \cite[Section 5.3]{PoTa21a}
	\begin{equation}
	\label{eq:FTvarphi0algestim}
	|\hat f(v)| \le \frac{3\,(2  \beta)!}{2^{3/2}\,4^{\beta}\,\beta!\,\pi^{\beta-1/2} }\, |v|^{-\beta-1/2}\,.
	\end{equation}
	For $\beta=2$ we observe by Corollary \ref{Cor:bandlimited} with $M=1$ and $b=2$ the decay $P^{-2.5}$, see Figure \ref{fig:alg_decay}.
\end{example}

\begin{figure}[ht]
	\centering
	\begin{subfigure}[t]{0.48\textwidth}
		\centering
		\begin{tikzpicture}
		\begin{loglogaxis}[FTerrstyle, log basis x=2,
			xlabel={$P$}, ylabel={$M_{P,L}(f)$}, legend pos=north east]
		\addplot[FTest] coordinates {
			(2,5.553604e-01) (4,1.388401e-01) (8,3.471002e-02) (16,8.677506e-03)
			(32,2.169377e-03) (64,5.423442e-04) (128,1.355861e-04) (256,3.389661e-05)
			(512,8.474230e-06) (1024,2.118635e-06)};
		\addplot[FTsim] coordinates {
			(2,1.058614e-01) (4,3.545666e-02) (8,9.894716e-03) (16,2.552464e-03)
			(32,6.433197e-04) (64,1.611598e-04) (128,4.031064e-05) (256,1.007895e-05)
			(512,2.519819e-06) (1024,6.299599e-07)};
		\legend{estimate, $M_{P,L}(f)$}
		\end{loglogaxis}
		\end{tikzpicture}
		\caption{mixed decay: $f(x) = \mathrm{e}^{-2\pi |x|}$ ($L=8$)}
		\label{fig:mixed_decay}
	\end{subfigure}
	\hfill
	\begin{subfigure}[t]{0.48\textwidth}
		\centering
		\begin{tikzpicture}
		\begin{loglogaxis}[FTerrstyle, log basis x=2,
			xlabel={$P$}, ylabel={$M_{P,L}(f)$}, legend pos=north east]
		\addplot[FTest] coordinates {
			(2,2.611784e+00) (4,4.617026e-01) (8,8.161825e-02) (16,1.442820e-02)
			(32,2.550570e-03) (64,4.508814e-04) (128,7.970532e-05) (256,1.409004e-05)};
		\addplot[FTsim] coordinates {
			(2,5.731722e-02) (4,9.908862e-03) (8,1.784438e-03) (16,3.213549e-04)
			(32,5.742238e-05) (64,1.022012e-05) (128,1.812819e-06) (256,3.210081e-07)};
		\legend{estimate, $M_{P,L}(f)$}
		\end{loglogaxis}
		\end{tikzpicture}
		\caption{algebraic decay: $f(x) = (1 - x^2)^{3/2}$ for $x\in [-1,1]$ ($L=2$)}
		\label{fig:alg_decay}
	\end{subfigure}
	\captionsetup{font=small}
	\caption{Estimate and simulated error $M_{P,L}(f)$ for Example \ref{ex:mixeddecay}
		with $L=8$ (left, decay $P^{-2}$) and Example \ref{Ex:alg2.5} with $L=2$
		(right, decay $P^{-2.5}$).}
\end{figure}

\new{
\begin{example}
\label{Ex:Ldependence}
So far all experiments were carried out for a fixed truncation parameter $L$
and increasing bandwidth $P$, i.e.\ they probe the aliasing term
$L^{-1/2}\,\delta(\hat f,P)$ of \eqref{eq:fsuni}. We now isolate the
\emph{truncation} term $L^{1/2}\,\delta(f,L)$ and keep $P$ fixed while $L$
increases. To this end we use the bandlimited function of
Example~\ref{Ex:M2},
$$
g(x) = \frac{M}{2}\,\Bigl(\mathrm{sinc}\bigl(\tfrac{M}{2}\,x\bigr)\Bigr)^2\,,
\qquad
\hat g(v) = M_2\Bigl(\frac{2v}{M}\Bigr)\,,
\qquad M = 2\,,
$$
for which $\mathrm{supp}\,\hat g \subseteq [-1,\,1]$, hence
$\delta(\hat g, P) = 0$ for all $P \ge 2$, and $g$ has polynomial decay with
$a = 2$ and $c = \sup_x |g(x)|(1+|x|)^2 = 1.267505$. By
Corollary~\ref{Cor:bandlimited} we obtain the estimate
$$
M_{P,L}(g) \ \le\ 2c\,(2^2-1)\,\zeta(2)\,L^{-3/2} = \pi^2 c\,L^{-3/2}\,,
$$
and by Theorem~\ref{Thm:lowerbound} the matching lower bound of the same order
$L^{-3/2}$.

Figure~\ref{fig:Ldep} shows the measured errors for $L = 2^1,\ldots,2^9$ and
$P = 8$. The observed convergence order, computed from consecutive values, is
$-1.457$, $-1.487$, $-1.497$, $-1.499$, $-1.500$, $-1.500$, $-1.500$, $-1.500$,
i.e.\ it converges to the predicted exponent $-\frac32 = \frac12 - a$. Repeating
the experiment with $P = 32$ reproduces the same values to five significant
digits, which confirms that the truncation term of \eqref{eq:fsuni} is indeed
independent of $P$.

The same figure shows the scaled Euclidean error $E_{P,L}^{\ast}(g)$ of
\cite{EhGrKl24}. In this truncation dominated regime both errors have the same
order, and the quotient $E_{P,L}^{\ast}(g)/M_{P,L}^{\ast}(g)$ increases
monotonically from $1.216$ at $L = 2$ to the limit $1.244$, in agreement with
Remark~\ref{Rem:symmetry}. Hence, for this class of functions, our uniform
estimate loses nothing compared with the Euclidean estimate, although it
controls the error at every $v$ and not only at the $LP$ grid points.
\end{example}

% ===========================================================================
%  L-dependence and the quotient E*/M*
% ===========================================================================
\begin{figure}[ht]
	\centering
	\begin{subfigure}[t]{0.48\textwidth}
		\centering
		\begin{tikzpicture}
		\begin{loglogaxis}[FTerrstyle, log basis x=2,
			xlabel={$L$}, ylabel={error}, legend pos=south west]
		\addplot[FTest] coordinates {
			(2,4.422872e+00) (4,1.563721e+00) (8,5.528589e-01) (16,1.954652e-01)
			(32,6.910737e-02) (64,2.443314e-02) (128,8.638421e-03)
			(256,3.054143e-03) (512,1.079803e-03)};
		\addplot[FTsim] coordinates {
			(2,6.870847e-02) (4,2.502981e-02) (8,8.927725e-03) (16,3.163788e-03)
			(32,1.119230e-03) (64,3.957663e-04) (128,1.399297e-04)
			(256,4.947308e-05) (512,1.749142e-05)};
		\addplot[black, dotted, mark=none] coordinates {
			(2,8.357928e-02) (4,3.088988e-02) (8,1.108188e-02) (16,3.933932e-03)
			(32,1.392307e-03) (64,4.923844e-04) (128,1.740957e-04)
			(256,6.155313e-05) (512,2.176241e-05)};
		\legend{estimate, $M_{P,L}(g)$, $E^{\ast}_{P,L}(g)$}
		\end{loglogaxis}
		\end{tikzpicture}
		\caption{fixed $P = 8$, increasing $L$ (decay $L^{-3/2}$)}
		\label{fig:Ldep}
	\end{subfigure}
	\hfill
	\begin{subfigure}[t]{0.48\textwidth}
		\centering
		\begin{tikzpicture}
		\begin{loglogaxis}[FTerrstyle, log basis x=2,
			xlabel={$P$}, ylabel={error}, legend pos=south west]
		\addplot[FTsim] coordinates {
			(2,6.748202e-02) (4,2.002018e-02) (8,5.637129e-03) (16,1.515243e-03)
			(32,3.939908e-04) (64,1.005330e-04) (128,2.539690e-05)
			(256,6.382778e-06) (512,1.599926e-06) (1024,4.005130e-07)};
		\addplot[black, dotted, mark=none] coordinates {
			(2,9.471527e-02) (4,3.544960e-02) (8,1.277444e-02) (16,4.540515e-03)
			(32,1.607520e-03) (64,5.685408e-04) (128,2.010270e-04)
			(256,7.107530e-05) (512,2.512905e-05) (1024,8.884473e-06)};
		\legend{$M_{P,L}(f)$, $E^{\ast}_{P,L}(f)$}
		\end{loglogaxis}
		\end{tikzpicture}
		\caption{$f = M_2$, $L = 2$: $P^{-2}$ versus $P^{-3/2}$}
		\label{fig:EstarM}
	\end{subfigure}
	\captionsetup{font=small}
	\caption{Left: truncation dominated regime of Example~\ref{Ex:Ldependence};
	both error measures decay like $L^{-3/2}$ and their quotient tends to
	$1.244$. Right: aliasing dominated regime (B-spline $M_2$, $L = 2$); the
	scaled maximum error decays like $P^{-b} = P^{-2}$, the scaled Euclidean
	error only like $P^{-b+1/2} = P^{-3/2}$, so that their quotient grows like
	$\sqrt{LP}$, cf.\ Remark~\ref{Rem:notequivalent}.}
\end{figure}

\section{Computational cost\label{sec:cost}}

The estimates of Section~\ref{sec:samplingpolynomials} answer the question how
accurate the sampling polynomial $s_{P,L}$ is. We now address the complementary
question how expensive it is to evaluate it, and compare the three possibilities
listed in Algorithm~\ref{Alg:FTviaNFFT}:
\begin{itemize}
\item \emph{direct evaluation} of \eqref{eq:samplingpoly} at $K$ nodes, which
requires $\mathcal{O}(LPK)$ arithmetic operations,
\item the \emph{\FFT}, which delivers the $LP$ values $s_{P,L}(k/L)$,
$k \in [LP]$, on the equispaced grid in $\mathcal{O}(LP\log LP)$ operations,
\item the \emph{\NFFT}, which delivers the values $s_{P,L}(v_j)$,
$j = 1,\ldots,K$, at arbitrary nodes in
$\mathcal{O}(LP\log LP + K)$ operations.
\end{itemize}

All computations were performed in double precision on an
Intel\textsuperscript{\textregistered} Core\texttrademark{} i7-6700 CPU at
$3.40$\,GHz, using Python~3.13 with NumPy~2.4 and the NFFT3 library
\cite{nfft3} through its Python interface \texttt{pyNFFT3}~1.0.2
\cite{pyNFFT3}. As a test function we use the two-sided exponential
$f(x) = {\mathrm e}^{-2\pi|x|}$ of Example~\ref{ex:mixeddecay} with $L = 8$, for
which $\hat f(v) = \bigl(\pi(1+v^2)\bigr)^{-1}$ is known in closed form. Timings
are the minimum over five runs after one warm-up run.

Table~\ref{tab:fft} compares the FFT with direct evaluation on the equispaced
grid. Both compute the same quantity, and indeed they agree to machine
precision; the FFT is faster by up to four orders of magnitude. The last column
recalls the corresponding approximation error $M_{P,L}(f)$, which is the
quantity bounded by Theorem~\ref{Thm:mixeddecay}. It is worth noting that the
difference between the two algorithms is purely a matter of speed: the accuracy
of the computed Fourier transform is governed entirely by $M_{P,L}(f)$ and not
by the evaluation scheme.

\begin{table}[ht]
\centering
\captionsetup{font=small}
\caption{Equispaced output nodes $v = k/L$, $k \in [LP]$:
direct evaluation versus FFT, for $f(x) = {\mathrm e}^{-2\pi|x|}$ and $L = 8$.
The column $\|\cdot\|_{\infty}$ contains the maximum deviation between the two
computed results.}
\label{tab:fft}
\small
\begin{tabular}{rrrrrrr}
\hline
$P$ & $LP$ & $t_{\mathrm{direct}}$\,[s] & $t_{\mathrm{FFT}}$\,[s] &
speed-up & $\|\cdot\|_{\infty}$ & $M_{P,L}(f)$\\
\hline
$16$   & $128$   & $8.0\cdot 10^{-4}$ & $4.2\cdot 10^{-5}$ & $19$    & $1.7\cdot10^{-16}$ & $2.55\cdot10^{-3}$\\
$32$   & $256$   & $3.2\cdot 10^{-3}$ & $4.7\cdot 10^{-5}$ & $68$    & $2.5\cdot10^{-16}$ & $6.43\cdot10^{-4}$\\
$64$   & $512$   & $1.2\cdot 10^{-2}$ & $5.8\cdot 10^{-5}$ & $214$   & $2.2\cdot10^{-16}$ & $1.61\cdot10^{-4}$\\
$128$  & $1024$  & $5.1\cdot 10^{-2}$ & $7.5\cdot 10^{-5}$ & $683$   & $3.9\cdot10^{-16}$ & $4.03\cdot10^{-5}$\\
$256$  & $2048$  & $2.1\cdot 10^{-1}$ & $1.1\cdot 10^{-4}$ & $1853$  & $5.6\cdot10^{-16}$ & $1.01\cdot10^{-5}$\\
$512$  & $4096$  & $9.0\cdot 10^{-1}$ & $1.8\cdot 10^{-4}$ & $4895$  & $8.9\cdot10^{-16}$ & $2.52\cdot10^{-6}$\\
$1024$ & $8192$  & $3.2\cdot 10^{0}$  & $3.5\cdot 10^{-4}$ & $9271$  & $1.2\cdot10^{-15}$ & $6.30\cdot10^{-7}$\\
$2048$ & $16384$ & $1.3\cdot 10^{1}$  & $6.9\cdot 10^{-4}$ & $18355$ & $2.5\cdot10^{-15}$ & $1.58\cdot10^{-7}$\\
\hline
\end{tabular}
\end{table}

The situation of real interest, however, is the evaluation at \emph{arbitrary}
nodes, where the FFT is not applicable at all. Table~\ref{tab:nfft} compares
the NFFT with direct evaluation at $K = 10\,000$ nodes drawn uniformly at
random from $[-P/2,\,P/2)$. The NFFT is used with oversampling factor
$\sigma = 2$ and cut-off parameter $m = 8$; the resulting relative deviation
$e_{\mathrm{NFFT}}$ from the directly computed values stays at the level of the
round-off error, while the speed-up reaches a factor of $4262$ at
$P = 4096$. The column $\max_j |\hat f(v_j) - s_{P,L}(v_j)|$ shows the error
that is actually committed with respect to the true Fourier transform. It
agrees with $\sqrt{L}\,M_{P,L}(f)$ to three digits in every row, i.e.\ the
random nodes do attain the uniform bound. This illustrates once more that it is
the \emph{uniform} error \eqref{eq:uniform}, and not its discrete version
\eqref{eq:discreteuniform}, which governs the accuracy of the NFFT-based
computation.

\begin{table}[ht]
\centering
\captionsetup{font=small}
\caption{$K = 10\,000$ random nonequispaced nodes in $[-P/2,\,P/2)$: direct
evaluation versus NFFT ($\sigma = 2$, $m = 8$), for
$f(x) = {\mathrm e}^{-2\pi|x|}$ and $L = 8$. Here $e_{\mathrm{NFFT}}$ is the
relative maximum deviation of the NFFT from the direct evaluation.}
\label{tab:nfft}
\footnotesize
\begin{tabular}{@{}rrrrrrr@{}}
\hline
$P$ & $LP$ & $t_{\mathrm{direct}}$\,[s] & $t_{\mathrm{NFFT}}$\,[s] &
speed-up & $e_{\mathrm{NFFT}}$ & $\max_j|\hat f(v_j)-s_{P,L}(v_j)|$\\
\hline
$16$   & $128$   & $7.2\cdot10^{-2}$ & $1.50\cdot10^{-3}$ & $48$   & $3.8\cdot10^{-15}$ & $7.22\cdot10^{-3}$\\
$32$   & $256$   & $1.3\cdot10^{-1}$ & $1.52\cdot10^{-3}$ & $86$   & $3.7\cdot10^{-15}$ & $1.82\cdot10^{-3}$\\
$64$   & $512$   & $2.6\cdot10^{-1}$ & $1.56\cdot10^{-3}$ & $167$  & $4.2\cdot10^{-15}$ & $4.56\cdot10^{-4}$\\
$128$  & $1024$  & $5.1\cdot10^{-1}$ & $1.64\cdot10^{-3}$ & $310$  & $3.8\cdot10^{-15}$ & $1.14\cdot10^{-4}$\\
$256$  & $2048$  & $1.0\cdot10^{0}$  & $1.70\cdot10^{-3}$ & $586$  & $4.2\cdot10^{-15}$ & $2.85\cdot10^{-5}$\\
$512$  & $4096$  & $2.0\cdot10^{0}$  & $1.80\cdot10^{-3}$ & $1121$ & $3.8\cdot10^{-15}$ & $7.13\cdot10^{-6}$\\
$1024$ & $8192$  & $4.0\cdot10^{0}$  & $2.90\cdot10^{-3}$ & $1363$ & $4.5\cdot10^{-15}$ & $1.78\cdot10^{-6}$\\
$2048$ & $16384$ & $7.7\cdot10^{0}$  & $2.56\cdot10^{-3}$ & $3006$ & $9.4\cdot10^{-15}$ & $4.45\cdot10^{-7}$\\
$4096$ & $32768$ & $1.6\cdot10^{1}$  & $3.67\cdot10^{-3}$ & $4262$ & $2.4\cdot10^{-14}$ & $1.11\cdot10^{-7}$\\
\hline
\end{tabular}
\end{table}

Finally, Table~\ref{tab:cutoff} documents step~4 of
Algorithm~\ref{Alg:FTviaNFFT}, i.e.\ the choice of the NFFT cut-off parameter
$m$. As predicted by \cite{PoTa21a,PoTa21b}, the NFFT error decays
exponentially in $m$, while the runtime grows only mildly. For $P = 256$ and
$L = 8$ the approximation error is
$\sqrt{L}\,M_{P,L}(f) = 2.85\cdot 10^{-5}$, so that already $m = 3$ makes the
NFFT error negligible; the frequently used default $m = 8$ reduces it to the
level of round-off. In other words, the accuracy of the whole computation is
determined by the parameters $L$ and $P$ through
Theorem~\ref{Thm:samplingpolynomials}, and the NFFT contributes no relevant
error at negligible additional cost.

\begin{table}[ht]
\centering
\captionsetup{font=small}
\caption{Influence of the NFFT cut-off parameter $m$ for
$f(x)={\mathrm e}^{-2\pi|x|}$, $P = 256$, $L = 8$, $K = 10\,000$ and
$\sigma = 2$. For comparison, the approximation error of the sampling
polynomial is $\sqrt{L}\,M_{P,L}(f) = 2.85\cdot 10^{-5}$.}
\label{tab:cutoff}
\footnotesize
\begin{tabular}{@{}lrrrrrrr@{}}
\hline
$m$ & $2$ & $3$ & $4$ & $5$ & $6$ & $7$ & $8$\\
\hline
$t_{\mathrm{NFFT}}$\,[ms] & $1.6$ & $2.1$ & $1.6$ & $1.6$ & $1.6$ & $1.6$
& $1.7$\\
$e_{\mathrm{NFFT}}$ & $7.7$e$-5$ & $8.7$e$-7$ & $2.3$e$-8$ & $1.5$e$-10$
& $1.1$e$-12$ & $1.8$e$-14$ & $4.2$e$-15$\\
\hline
\end{tabular}
\end{table}

\section{Conclusion\label{sec:conclusion}}

We have studied how well the continuous Fourier transform $\hat f$ of an
integrable function $f$ can be approximated by the $P$-periodic trigonometric
sampling polynomial $s_{P,L}$ of degree $LP/2$, measured in the uniform norm on
the whole frequency interval $[-P/2,\,P/2]$.

The central result is Theorem~\ref{Thm:samplingpolynomials}, which bounds the
scaled maximum approximation error $M_{P,L}(f)$ by the linear combination
$L^{-1/2}\,\delta(\hat f,P) + L^{1/2}\,\delta(f,L)$ of the decay rates
\eqref{eq:decayrate} of $\hat f$ and $f$. Combined with the explicit estimates
of the decay rates in Lemma~\ref{Lemma:polydecay} and
Lemma~\ref{Lemma:expodecay}, this yields fully explicit error bounds under
polynomial, exponential and mixed decay assumptions
(Theorems~\ref{Thm:polydecay}, \ref{Thm:expodecay} and
\ref{Thm:mixeddecay}), in which every constant is determined by the decay
parameters of $f$ and $\hat f$. Theorem~\ref{Thm:lowerbound} shows that the
resulting rate in the truncation parameter $L$ is sharp.

These estimates have two consequences of practical importance. First, they can
be used a~priori: given a target accuracy, the parameters $L$ and $P$ can be
determined before the computation is started, which is what
Algorithm~\ref{Alg:FTviaNFFT} does. Second, since the bound is uniform in $v$,
it remains valid when $s_{P,L}$ is evaluated at arbitrary nodes by an NFFT. The
NFFT thereby becomes a numerical method for the Fourier transform itself and
not merely a fast evaluation scheme for trigonometric polynomials. 

Our results also clarify the relation to the work of M.~Ehler, K.~Gr\"ochenig
and A.~Klotz \cite{EhGrKl24}. The scaled maximum error and the scaled Euclidean
error are not equivalent; their quotient ranges over $[1,\,\sqrt{LP}\,]$, and
both extremes occur (Remark~\ref{Rem:notequivalent}). In the truncation
dominated regime the two error measures agree up to a constant, whereas in the
aliasing dominated regime the uniform estimate derived here is better by the
factor $\sqrt{LP}$; Remark~\ref{Rem:symmetry} traces this back to the different
oscillatory behaviour of the aliasing and the truncation remainder and explains
why no symmetric bound can hold for the uniform error.
}

\section*{Acknowledgements}

The authors would like to thank M.~Ehler, K.~Gr\"{o}chenig, and H.G.~Feichtinger for the fruitful discussions.
\new{They are further indebted to the  reviewers for their careful
reading of the manuscript and for their constructive comments, which lead in
particular to the lower bound in Theorem~\ref{Thm:lowerbound}, to the
discussion of the time--frequency asymmetry in
Remark~\ref{Rem:symmetry} and Remark~\ref{Rem:notequivalent}, and to the new
Sections~\ref{sec:literature} and~\ref{sec:cost}.}

\bibliographystyle{abbrv}

\end{document}